\documentclass[12pt,reqno]{amsart}

\usepackage{mathtools}
\usepackage{amsfonts}
\usepackage{amsmath}
\usepackage{amsthm}
\usepackage{ytableau}
\usepackage{amssymb}
\usepackage{multirow}
\usepackage{hyperref}
\usepackage{comment}
\usepackage{natbib}
\usepackage{blindtext}
\usepackage{enumitem}
\usepackage{hyperref}
\usepackage{cleveref}

\newtheorem{theorem}{Theorem}[section]
\newtheorem{lemma}[theorem]{Lemma}
\newtheorem{proposition}[theorem]{Proposition}
\newtheorem{corollary}[theorem]{Corollary}

\theoremstyle{definition}
\newtheorem{definition}[theorem]{Definition}
\newtheorem{question}[theorem]{Question}
\theoremstyle{remark}
\newtheorem{example}[theorem]{Example}
\newtheorem{remark}[theorem]{Remark}
\newtheorem*{remark*}{Remark}
\newtheorem*{question*}{Question}
\newtheorem*{problem*}{The Superrestriction Problem}

\usepackage{tikz}
\usetikzlibrary{matrix,backgrounds}
\usepackage{tikz-cd}
\usetikzlibrary{graphs, shapes}

\newcommand{\T}{T_{m,n}^{d}}

\newcommand{\Hi}{\mathcal{H}(m,n;d)}

\newcommand{\ind}{\mathrm{sInd}_{m,n}^{d}}
\newcommand{\res}{\mathrm{sRes}_{m,n}^{d}}

\DeclareMathOperator{\fr}{{Frob}}
\DeclareMathOperator{\ch}{char}
\DeclareMathOperator{\gl}{\mathrm{GL_{n}}(\mathbb{C})}
\DeclareMathOperator{\glm}{\mathrm{GL_{m}}(\mathbb{C})}
\newcommand{\indc}{\mathrm{Ind}^{d}}

\DeclareMathOperator{\Res}{Res}
\DeclareMathOperator{\Hom}{Hom}
\DeclareMathOperator{\mod1}{-mod}
\DeclareMathOperator{\smod}{-smod}
\DeclareMathOperator{\rep}{-rep}
\DeclareMathOperator{\sign}{sign}
\DeclareMathOperator{\parr}{Par}
\DeclareMathOperator{\stab}{Stab}
\newcommand{\aj}{A_{*j}}
\newcommand{\ai}{A_{i*}}

\title[superrestriction coefficients]{A superplethystic formula for\\superrestriction 
coefficients}
\author{Tirtharaj Basu}
\address{The Institute of Mathematical Sciences\\
A CI of Homi Bhabha National Institute\\
Chennai 600113\\
India}

\keywords{symmetric functions, supersymmetric polynomials, Schur superalgebra, restriction coefficients, unimodality}

\subjclass{17A70, 20C30, 20G43, 05E05, 05E10}

\begin{document}
\begin{abstract}
    The Schur superalgebra $S(m,n;d)$ contains $S_{m}\times S_{n}$ as a multiplicative subgroup. We consider the problem of determining the restriction coefficients when an irreducible supermodule of $S(m,n;d)$ is restricted to $S_{m}\times S_{n}$. We introduce two notions of superplethysm and provide a superplethystic variant of Littlewood's formula for these restriction coefficients. As an application, we provide several unimodality results for bipartite superpartitions.
\end{abstract}
\maketitle
\section{Introduction}
The symmetric group $S_{d}$ acts on the vector superspace $\T=\bigotimes^{d}(\mathbb{C}^{m}\oplus\mathbb{C}^{n})$, and its commutant $\text{End}_{S_{d}}(\T)$, denoted by $S(m,n;d)$, is called the Schur superalgebra. The product $S_{m}\times S_{n}$ sits within the even part of $S(m,n;d)$ as a multiplicative subgroup. 

The irreducible supermodules of $S(m,n;d)$ are indexed by partitions $\lambda\vdash d$ satisfying $\lambda_{m+1}\leq n$. We denote this indexing set by $\Hi$. Let $W_{\lambda}$ denote the irreducible supermodule of $S(m,n;d)$ corresponding to $\lambda\in\Hi$, and $V_{\mu}$ denote the Specht module of $S_n$ corresponding to any partition $\mu\vdash n$.
Let $\res W$ denote the restriction of an $S(m,n;d)$ supermodule to $S_m\times S_n$. In this article, we investigate the following problem.

\begin{problem*}
    Determine the multiplicities $r_{\lambda\mu\nu}$ in the decomposition
    \begin{displaymath}
        \res W_\lambda = \bigoplus_{\mu\vdash m,\;\nu\vdash n} (V_\mu\otimes V_\nu)^{\oplus r_{\lambda\mu\nu}}.
    \end{displaymath}
\end{problem*}

When $n=0$, this becomes the problem of understanding the restriction of homogeneous polynomial representations of $\glm$ of degree $d$ to $S_m$, which is given by Littlewood's formula ~\cite[Theorem XI]{littlewood1958products}
\begin{align}\label{LittlwoodFormula}
    r_{\lambda\mu\emptyset}=\langle s_{\lambda},s_{\mu}[\tilde H] \rangle_{\Lambda(X)},
\end{align}
where $\Tilde{H}$ is the sum of all monomials in $\{x_{1},x_{2},\dotsc,x_{n}\}$ and $\langle\cdot,\cdot\rangle_{\Lambda(X)}$ is the Hall inner product \cite[Chapter I, Equation 4.5]{macdonald1998symmetric} on the algebra of symmetric functions $\Lambda(X)$.

\begin{remark*}Usually, $H$ denotes the sum of all monomials in $\{x_{1},x_{2},\dotsc,x_{n}\}$. Here, we write this sum as $\tilde{H}$, and reserve $H$ for the corresponding super version (see \eqref{Littlewood}). An analogous convention is adopted for $E$. Similarly, we denote the irreducible supermodule of $S(m,n;d)$ corresponding to partition $\lambda\in\Hi$ by $W_{\lambda}$ and the irreducible module of $S(m,d)$ (equivalently irreducible polynomial representations of $\glm$) corresponding to the partition $\mu=(\mu_{1},\dotsc,\mu_{l})$ ($l\leq d$) by $\tilde{W}_{\lambda}$.
\end{remark*}

The nonnegative integer
$r_{\lambda\mu\emptyset}$ is the \textit{classical} restriction coefficient $r_{\lambda\mu}$ of the Specht module $V_{\mu}$ when the degree $d$ polynomial representation $\tilde{W}_{\lambda}$ of $\glm$ is restricted to $S_{m}$. It is still an open problem to find a combinatorial description of all the classical restriction coefficients, although partial progress can be found in \cite{assaf2020specht,harman2018representations,lee2025restriction,narayanan2021character, narayanan2021polynomial, narayanan2024some,  narayanan2024hook,orellana2021hopf,orellana2021symmetric}.

Let $\Lambda(X|Y)$ denote the algebra of supersymmetric polynomials \cite{stembridge1985characterization} in variables $\{x_{i}\}_{i=1}^{m}\cup\{y_{j}\}_{j=1}^{n}$. The set of hook Schur polynomials $\{hs_{\lambda}\mid\lambda\in\Hi\}$ forms a basis for $\Lambda(X|Y)$. Let $\langle\cdot,\cdot\rangle_{\Lambda(X|Y)}$ denote the inner product with respect to which this basis is orthonormal. We introduce two different versions of plethysm in Section \ref{plethysm} for the space $\Lambda(X|Y)$, where for any symmetric function $f$ and any supersymmetric polynomial $g\in\Lambda(X|Y)$, $f[g]$ and $f\{g\}$ are again elements in $\Lambda(X|Y)$. Our main theorem (Theorem \ref{Littlewood}) generalizes Littlewood's formula:
\begin{align}\label{introLittlewood}
    r_{\lambda\mu\nu} = \langle s_{\mu}[H]s_{\nu}\{E\}, hs_{\lambda}(X,Y) \rangle_{\Lambda(X|Y)},
\end{align}
where
$H=\frac{\prod_{j=1}^{n}(1+y_{i})}{\prod_{i=1}^{m}(1-x_{i})}$ and 
$E=\frac{\prod_{i=1}^{m}(1+x_{i})}{\prod_{j=1}^{n}(1-y_{i})}.$

A crucial tool for the proof of \eqref{introLittlewood} is a \textit{polynomial superinduction} functor $$\ind: S_m \times S_n\rep : \xrightarrow[]{}S(m,n;d)\smod,$$ which is a generalization of the polynomial induction functor defined in \cite{narayanan2021polynomial}. For $\mu\vdash m$ and $\nu\vdash n$, the formal character of the superinduction $\ind (V_{\mu}\otimes V_{\nu})$ is given by (Theorem \ref{inductionFromula})
\begin{align}
    \ch(\ind (V_{\mu}\otimes V_{\nu}))=(s_{\mu}[H]s_{\nu}\{E\})_{d},
\end{align}
where $(s_{\mu}[H]s_{\nu}\{E\})_{d}$ denotes the degree $d$ homogeneous part of the product $s_{\mu}[H]s_{\nu}\{E\}$. 

In Proposition \ref{trivialRestriction}, we provide signed combinatorial formulae for $r_{\lambda\mu\nu}$ for certain $\lambda$'s, when $\mu$ and $\nu$ are trivial or sign representations. Using these formulae, we show the unimodality of \textit{bipartite superpartitions} in Section \ref{bipartiteNumbers} which generalizes a result of Kim and Hahn \cite{kim1997partitions}.

\subsection{Acknowledgements}I am grateful to Amritanshu~Prasad for many valuable discussions and insightful comments during the preparation of this article, and for carefully reading the entire manuscript. I thank Aritra~Bhattacharya for valuable feedback on an earlier draft. I also thank Aritra~Bhattacharya and Nishu~Kumari for their input leading to Remark~\ref{superization}.

\section{Schur superalgebra}
\subsection{Notation} We denote the set of non-negative integers by $\mathbb{N}$, the set of positive integers by $\mathbb{P}$ and the set $\{0,1\}$ by $\mathbb{B}$. Given any partition $\lambda$, denote its length by $l(\lambda)$.
\subsection{Superspaces} Throughout the article, all vector spaces are assumed to be over $\mathbb{C}$. A vector superspace $V$ is a vector space with a $\mathbb{Z}_2$-grading, i.e., a decomposition $V=V_{0}\oplus V_{1}$. Given a homogeneous vector $v\in V$, denote its $\mathbb{Z}_2$-degree/parity by $|v|\in\mathbb{Z}_2$.
When $V$ and $W$ are superspaces, $\Hom_{\mathbb{C}}(V,W)$ of linear maps admits a decomposition
$$
    \Hom_{\mathbb{C}}(V,W) = \Hom_{\mathbb{C}}(V,W)_0 \oplus \Hom_{\mathbb{C}}(V,W)_1,
$$
where
$$
\Hom_{\mathbb{C}}(V,W)_i = \{T\in \Hom_{\mathbb{C}}(V,W)\mid T(V_j)\subset T(V_{j+i})\} \text{ for }i,j\in \mathbb Z/2\mathbb Z.
$$
A subsuperspace of a vector superspace $V$ is a vector subspace $W$ of $V$ such that $W$ has a decomposition $W=W_{0}\oplus W_{1}$ with $W_0\subset V_0$ and $W_1\subset V_1$. Given two vector superspaces $M$ and $N$, one can form their direct sum $M\oplus N$ and tensor $M\otimes N$ by setting:
$(M\oplus N)_{i}=M_{i}\oplus N_{i}$ for $i\in\mathbb{Z}_{2}$; $(M\otimes N)_{0}=(M_{0}\otimes N_{0})\oplus (M_{1}\oplus N_{1})$ and $(M\otimes N)_{1}=(M_{0}\otimes N_{1})\oplus (M_{0}\oplus N_{1})$.
\subsection{Superalgebras} A superalgebra is a superspace $A$ with an additional structure of an associative unital $\mathbb{C}$-algebra such that $A_{i}A_{j}\subseteq A_{i+j}$ for each $i,j\in\mathbb{Z}_{2}$. A superalgebra morphism $f:A\xrightarrow{}B$ is an even linear map which is also an algebra homomorphism.

\subsection{Supermodules}
Let $A$ be a superalgebra. A left supermodule of $A$ is a superspace $M$ that is also an $A$-module such that $A_{i}M_{j}\subseteq M_{i+j}$ for $i,j\in\mathbb{Z}_{2}$. Similarly, one defines right supermodules. A morphism $\phi:M\xrightarrow{}N$ of left $A$-supermodules is a linear map such that $$\phi(a.m)=(-1)^{|a||\phi|}a.\phi(m).$$
A morphism $\phi:M\xrightarrow{}N$ of right $A$-supermodules is a linear map such that $$\phi(m.a)=\phi(m).a.$$
Let $B$ be another superalgebra. We say a superspace $M$ is a $(A,B)$-bisupermodule if $M$ is a left $A$-supermodule and right $B$-supermodule such that
$$(am)b=a(mb)$$ for all $a\in A, b\in B$ and $m\in M$.
\subsection{Schur superalgebra}\label{superPolynomials}
Let $[m]=\{1,2,\dotsc,m\}$, $[\Bar{n}]=\{\Bar{1},\dotsc,\Bar{n}\}$ and $[m|n]=[m]\cup[\Bar{n}]$. Equip $[m|n]$ with the ordering $$1<2<\cdots<m<\Bar{1}<\Bar{2}<\cdots<\Bar{n}.$$ Define the parity function $p:[m|n]\xrightarrow[]{}\mathbb{Z}_{2}$ by setting $p(i)=0$ if $i\in [m]$ and $p({j})=1$ if $j\in [\Bar{n}]$. Consider the vector superspace $\mathbb{C}^{m}\oplus \mathbb{C}^{n}$ where the $0$th (resp. $1$th) graded part is $\mathbb{C}^{m}$ (resp. $\mathbb{C}^{n}$). Denote the superspace $\bigotimes^{d}(\mathbb{C}^{m}\oplus \mathbb{C}^{n})$ by $\T$. A $\mathbb{Z}_2$-homogeneous basis for $\T$ is given by $\{e_{i}\}_{i=1}^{m+n}$ where $e_{i}$ is the $i$th coordinate vector in $\mathbb C^{m+n}$. Let $\sigma_i$ be the simple transposition $(i,i+1)$ for each $i\in[d-1]$. $S_d$ acts on $\T$ \cite[Definition 1.3 and Lemma 4.10]{berele1987hook}  as follows:
\begin{align}\label{symmetricAction}
\sigma_i\cdot (e_{i_{1}}\otimes e_{i_{2}}\otimes\cdots\otimes e_{i_{d}})=
    \begin{cases}
        e_{\sigma_i(1)}\otimes e_{\sigma_i(2)}\otimes\cdots\otimes e_{\sigma_i(d)} &\text{if }p(i)p(i+1)=0\\
        -e_{\sigma_i(1)}\otimes e_{\sigma_i(2)}\otimes\cdots\otimes e_{\sigma_i(d)}&\text{if }p(i)p(i+1)=1.
    \end{cases}
\end{align}

With this action $\T$ becomes a right $\mathbb{C}[S_{d}]$-module. The commutant $\text{End}_{S_{d}}(\T)$ is called the \textit{Schur superalgebra} \cite[Section 3.1]{hemmer2004representation}, and is denoted by $S(m,n;d)$. We give an interpretation of $S(m,n;d)$ as a dual of the superalgebra of polynomials in \textit{signed indeterminates} $(t_{ij})_{i,j=1}^{m+n}$.

Let $A(m,n)$ be the $\mathbb{C}$-algebra generated by $t_{ij}$ for $i,j\in [m|n]$ subject to the relations $$t_{ij}t_{kl}=(-1)^{(p(i)+p(j))(p(k)+p(l))}t_{kl}t_{ij}.$$ Assign each generator the parity $|t_{ij}|=p(i)+p(j)$. This extends uniquely to monomials by
$$\bigg|\prod_{i=1}^{m+n}\prod_{j=1}^{m+n}t_{ij}^{a_{ij}}\bigg|=\sum_{i=1}^{m+n}\sum_{j=1}^{m+n} a_{ij}|t_{ij}|\in\mathbb{Z}_2,$$
where $a_{ij}\in\mathbb N$. Since parity is preserved under multiplication by non-zero scalars, this also defines parity for every homogeneous element of $A(m,n)$. Let 
$$A(m,n)=A(m,n)_0 \oplus A(m,n)_1,$$
where $A(m,n)_{\epsilon}$ is the subspace of $A(m,n)$ spanned by monomials of parity $\epsilon\in\mathbb Z_2$. Hence $A(m,n)$ is a superalgebra. Let $A(m,n;d)$ denote the subspace spanned by homogeneous elements of degree $d$. The subalgebra $A(m,n;d)$ respects the $\mathbb{Z}_2$-grading of $A(m,n)$ and hence forms a subsuperalgebra of $A(m,n)$.
The space $A(m,n;d)$ admits a coalgebra structure coming from matrix multiplication:
\begin{displaymath}
    \Delta t_{ij} = \sum_{k=1}^{m+n} t_{ik}\otimes t_{kj}.
\end{displaymath}
Let $A(m,n;d)^{*}$ denote the resulting superalgebra structure on the linear dual of $A(m,n;d)$.
Then the following theorem is well-known (see e.g., \cite[Section 5]{brundan2003new}).
\begin{theorem} \label{isomorphismSchur}
    There is an isomorphism of superalgebras $S(m,n;d)\cong A(m,n;d)^{*}$ for all $m,n,d\in \mathbb P$.
\end{theorem}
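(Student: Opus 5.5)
The plan is to follow the classical Green-style argument for the ordinary Schur algebra, inserting signs throughout. Set $N=m+n$ and let $I(N,d)=[m|n]^d$ be the set of multi-indices $\mathbf i=(i_1,\dots,i_d)$. Write $e_{\mathbf i}=e_{i_1}\otimes\cdots\otimes e_{i_d}$, which is a homogeneous basis of $\T$. The group $S_d$ acts on $I(N,d)$ by place permutations, and by \eqref{symmetricAction} it acts on $\T$ by signed place permutations. The sign attached to $e_{\mathbf i}\cdot\sigma$ is the Koszul sign: the parity of the number of inversions of $\sigma$ among the odd positions of $\mathbf i$.

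\textbf{Step 1: the algebra side.} Since $t_{ij}$ supercommute, $A(m,n;d)$ has a basis of monomials $t_{\mathbf i,\mathbf j}=t_{i_1j_1}\cdots t_{i_dj_d}$, with one representative for each $S_d$-orbit of pairs $(\mathbf i,\mathbf j)$ under the diagonal action. We exclude orbits in which some odd generator $t_{ij}$ (that is, $p(i)+p(j)=1$) appears twice, since then $t_{ij}^2=0$. Reordering a monomial gives $t_{\mathbf i\sigma,\mathbf j\sigma}=\epsilon(\mathbf i,\mathbf j,\sigma)\,t_{\mathbf i,\mathbf j}$, where the sign comes from the commutation relations.

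\textbf{Step 2: the endomorphism side.} An endomorphism $\xi$ of $\T$ is determined by its matrix coefficients $\xi_{\mathbf i,\mathbf j}$, defined by $\xi(e_{\mathbf j})=\sum_{\mathbf i}\xi_{\mathbf i,\mathbf j}e_{\mathbf i}$. The condition that $\xi$ commutes with $S_d$ reads $\xi_{\mathbf i\sigma,\mathbf j\sigma}=\eta(\mathbf i,\mathbf j,\sigma)\,\xi_{\mathbf i,\mathbf j}$, where $\eta$ is the ratio of the Koszul signs for $\mathbf i$ and for $\mathbf j$. Commuting endomorphisms are therefore exactly the $\xi$ whose coefficients are constant up to this sign on orbits. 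On orbits where a stabilizer element forces $\eta=-1$, the coefficients must vanish.

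\textbf{Step 3: the pairing.} Define $\Phi:S(m,n;d)\to A(m,n;d)^*$ by $\Phi(\xi)(t_{\mathbf i,\mathbf j})=\xi_{\mathbf i,\mathbf j}$, twisted by a normalizing sign of the form $(-1)^{\sum_{k<l}p(j_k)(p(i_l)+p(j_l))}$. The sign is chosen so that $\Phi(\xi)$ is well defined, i.e. so that $\epsilon$ and $\eta$ match. Well-definedness, injectivity and surjectivity then all follow from one combinatorial claim. The claim is that, after the normalization, $\epsilon(\mathbf i,\mathbf j,\sigma)=\eta(\mathbf i,\mathbf j,\sigma)$. In particular, the orbits killed on the endomorphism side ($\eta=-1$ on the stabilizer) are exactly those where some odd $t_{ij}$ repeats. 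With the claim, both spaces have the same dimension, and $\Phi$ matches the orbit bases. The map is even because $|t_{\mathbf i,\mathbf j}|=\sum_k(p(i_k)+p(j_k))$, and this is precisely the parity of the matrix unit taking $e_{\mathbf j}$ to $e_{\mathbf i}$.

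\textbf{Step 4: multiplicativity.} We must show $\Phi(\xi\zeta)=\Phi(\xi)\Phi(\zeta)$, where the product on $A(m,n;d)^*$ is dual to $\Delta$. The coproduct satisfies
$$\Delta(t_{\mathbf i,\mathbf j})=\sum_{\mathbf k}\pm\, t_{\mathbf i,\mathbf k}\otimes t_{\mathbf k,\mathbf j},$$
where the sign comes from moving the odd factors past one another in the super tensor product. On the other side, $(\xi\zeta)_{\mathbf i,\mathbf j}=\sum_{\mathbf k}\xi_{\mathbf i,\mathbf k}\zeta_{\mathbf k,\mathbf j}$. Multiplicativity therefore reduces to checking that the normalizing signs from Step 3 compose correctly, i.e. that they form a coboundary-type identity. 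The unit corresponds to the counit $t_{ij}\mapsto\delta_{ij}$.

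\textbf{Main obstacle.} I expect the main difficulty to be the sign bookkeeping in Steps 3 and 4. The hard part is choosing the normalization so that well-definedness and multiplicativity hold simultaneously. I would first verify the case $d=2$ by hand to fix the convention, and then argue by induction on $d$, reducing to simple transpositions $\sigma_i$, where only the adjacent pair's parities matter. Alternatively, one can cite the identification of $S(m,n;d)$ with the degree-$d$ component of the dual of the coordinate superalgebra of $GL(m|n)$, as in \cite[Section 5]{brundan2003new}, which packages this same sign verification.
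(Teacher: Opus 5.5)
\textbf{How this compares with the paper.} The paper gives no proof of this statement. It records the isomorphism as well known and cites \cite[Section 5]{brundan2003new}, which is your fallback. Your main route is a genuinely different, self-contained proof. You compare the $S_d$-equivariance condition on the matrix coefficients of endomorphisms of $\T$ with the orbit basis of $A(m,n;d)$, in the style of Green. The route is sound, and the normalizing sign you proposed already works. The sign bookkeeping you flag as the main obstacle therefore closes with two short mod-$2$ computations, with no induction on $d$.

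\textbf{Completing the sign checks.} Write $a=p(\mathbf i)$, $b=p(\mathbf j)$, $c=p(\mathbf k)\in\mathbb{B}^d$, and $N(\mathbf i,\mathbf j)=(-1)^{\sum_{k<l}b_k(a_l+b_l)}$.

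For well-definedness you need $N(\mathbf i\sigma,\mathbf j\sigma)N(\mathbf i,\mathbf j)=\epsilon\eta$. Expanding the Koszul signs gives $\epsilon\eta=(-1)^{\sum(a_kb_l+a_lb_k)}$, summed over the pairs $k<l$ inverted by $\sigma$. On the other side, $\sum_{k<l}b_kb_l$ is permutation invariant, and $\sum_{k<l}b_ka_l$ changes by exactly $\sum(a_kb_l+a_lb_k)$ over the same pairs. On the stabilizer this gives $\epsilon=\eta$. Swapping two equal positions $k,l$ has $\eta=(-1)^{p(i_k)+p(j_k)}$, which is your vanishing claim.

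For multiplicativity, $\Delta(t_{\mathbf i,\mathbf j})=\sum_{\mathbf k}(-1)^{\sum_{r<s}(c_r+b_r)(a_s+c_s)}\,t_{\mathbf i,\mathbf k}\otimes t_{\mathbf k,\mathbf j}$, and
\[
\sum_{r<s}\bigl[(c_r+b_r)(a_s+c_s)+c_r(a_s+c_s)+b_r(c_s+b_s)\bigr]\equiv\sum_{r<s}b_r(a_s+b_s)\pmod 2 .
\]
So every term of $(\Phi(\xi)\otimes\Phi(\zeta))(\Delta t_{\mathbf i,\mathbf j})$ carries the same sign $N(\mathbf i,\mathbf j)$, and the sum is $N(\mathbf i,\mathbf j)\sum_{\mathbf k}\xi_{\mathbf i,\mathbf k}\zeta_{\mathbf k,\mathbf j}=\Phi(\xi\zeta)(t_{\mathbf i,\mathbf j})$. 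Terms with $t_{\mathbf i,\mathbf k}=0$ cause no trouble, because then $\xi_{\mathbf i,\mathbf k}=0$.

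This uses the unsigned dual product $(\phi\psi)(x)=\sum\phi(x_{(1)})\psi(x_{(2)})$; the paper leaves the convention unspecified. With the Koszul-signed product you pick up an overall $(-1)^{|\xi||\zeta|}$ for homogeneous $\xi,\zeta$. This is absorbed by multiplying $N$ by $(-1)^{(|a|-|b|)(|a|-|b|-1)/2}$, where $|a|=\sum_k a_k$.

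\textbf{What each approach buys.} The citation is short, but it leaves implicit the conventions on which the paper's later sign computations depend: the product on the dual and the normalization of $\xi_A$. Your explicit $\Phi$ fixes them. For example, take an even block permutation matrix $g=(\tau,\sigma)$. Every pair in the support of $g^{\otimes d}$ has $a=b$, so $N=1$ there. Hence $\Phi(g^{\otimes d})=\sum_{\mathrm{supp}(A)\subseteq\mathrm{supp}(g)}\xi_A$, which is exactly the description of $S_m\times S_n\subset S(m,n;d)$ that the paper uses.
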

\begin{definition}
    Let $M(m,n)$ denote the set of $(m+n)\times (m+n)$ matrices $A=(a_{ij})$ whose diagonal $m\times m$ and $n\times n$ blocks have entries from $\mathbb N$, and off-diagonal blocks have entries from $\mathbb B$ (see Fig.~\ref{fig:block-matrix}). Given $A\in M(m,n)$, we denote its $i$th row (resp. $j$th column) by $\ai$ (resp. $\aj$). Let $M(m,n;d)\subset M(m,n)$ consists of those matrices in $M(m,n)$ satisfying $$\sum_{1\leq i,j\leq m+n}a_{ij}=d.$$
    Given a matrix $A\in M(m,n)$, we call its off-diagonal blocks the \textit{super part of $A$} (see Figure \ref{fig:block-matrix}). We call the entries of $\ai$ (resp. $\aj$) that lie in the super part of $A$ the super part of $\ai$ (resp. $\aj$). Let $n(\ai)$ (resp. $n(\aj)$) denote the number of $1$'s in the super part of $\ai$ (resp. $\aj$).
\end{definition}

\begin{figure}[h]
\centering
\begin{tikzpicture}
\matrix (m) [matrix of math nodes,
             nodes in empty cells,
             left delimiter={[}, right delimiter={]},
             row sep=6pt, column sep=6pt] {
3 & 12 & 0 & 5 & 8 & 1 & 0 & 1 \\
7 & 0 & 4 & 2 & 6 & 0 & 1 & 0 \\
1 & 9 & 3 & 0 & 5 & 1 & 0 & 1 \\
4 & 6 & 2 & 8 & 1 & 0 & 1 & 0 \\
0 & 5 & 7 & 3 & 9 & 1 & 0 & 1 \\
0 & 1 & 0 & 1 & 1 & 4 & 6 & 1 \\
1 & 0 & 1 & 0 & 1 & 7 & 5 & 0 \\
0 & 1 & 0 & 1 & 1 & 2 & 8 & 6 \\
};
\begin{scope}[on background layer]
\fill[gray!25]
(m-1-6.north west) rectangle (m-5-8.south east);

\fill[gray!25]
(m-6-1.north west) rectangle (m-8-5.south east);
\end{scope}

\end{tikzpicture}

\caption{An element of $M(5,3)$ with the super part shaded.}
\label{fig:block-matrix}

\end{figure}

For any matrix $A\in M(m,n;d)$, define the monomial
\begin{equation}\label{eq:monomial}
    t^A = \prod_{i=1}^{m+n}\prod_{j=1}^{m+n}t_{ij}^{a_{ij}}.
\end{equation}
A $\mathbb{Z}_{2}$-homogeneous basis for $A(m,n;d)$ is given by $\{t^A\mid A\in M(m,n;d)\}$, where recall that the parity of $t^A$ is given by $|t^{A}|=\sum_{i=1}^{m+n}\sum_{j=1}^{m+n}a_{ij}(p(i)+p(j))\in\mathbb{Z}_2$. The set $$\{\xi_{A}\mid A\in M(m,n;d)\},$$ where $\xi_{A}(t^{B})=\delta_{A,B}$ for $A,B\in M(m,n;d)$, forms a $\mathbb{Z}_2$-homogeneous basis for $S(m,n;d)$. Here the parity of $\xi_{A}$ is given by $|t^{A}|$. In \cite{brundan2003new}, the authors use pairs of functions $\mathbf{i}:[d]\xrightarrow[]{}[m+n]$ to index the above bases of $A(m,n;d)$ and $S(m,n;d)$. We now explain the relationship between such pairs of functions and matrices in $M(m,n;d)$. Let $I(m,n;d)$ denote the set of all functions $\mathbf{i}:[d]\xrightarrow[]{}[m+n]$. $S_{d}$ acts on $I(m,n;d)$ as follows:
$$(i_{1},\dotsc,i_{d})\cdot\sigma=(i_{\sigma(1)},\dotsc,i_{\sigma(d)}). $$

A pair $(\mathbf{i},\mathbf{j})\in I(m,n;d)\times I(m,n;d)$ is said to be \textit{strict} if $$p(i_{k})+p(j_{k})=p(i_{l})+p(j_{l})=0$$ whenever $(i_{k},j_{k})=(i_{l},j_{l})$ for $1\leq k<l\leq d$. Let $I^{2}(m,n;d)$ denote the set of strict pairs in $I(m,n;d)\times I(m,n;d)$. Let $\Omega(m,n;d)$ denote the set of orbits for the action of $S_d$ on $I^{2}(m,n;d)$. For a pair $(\mathbf{i},\mathbf{j})\in I(m,n;d)\times I(m,n;d)$, let $\psi(\mathbf{i},\mathbf{j})$ be the matrix with entries $\psi(\mathbf{i},\mathbf{j})_{ij}$ being the number of $(i_{k},j_{k})$ in $(\mathbf{i},\mathbf{j})$ satisfying $(i_{k},j_{k})=(i,j)$. The map $\psi$ preserves the action of $S_d$ on $I(m,n;d)\times I(m,n;d)$ and induces a bijection from $\Omega(m,n;d)$ to $M(m,n;d)$.

\subsection{$S_{m}\times S_{n}$ inside $S(m,n;d)$}
The $0$-graded part of $S(m,n;d)$ is isomorphic to the product $S(m;d)\times S(n;d)$ of classical Schur algebras. Moreover, $S(m;d)$ (resp. $S(n;d)$) contains $S_{m}$ (resp. $S_n$) (see \cite[Example 6.2.1]{prasad2015representation}). Hence we have the following inclusions:
$$S_{m}\times S_{n}\subset S(m;d)\times S(n;d)\subset S(m,n;d).$$ More precisely, for $(\tau,\sigma)\in S_m \times S_n$, the corresponding element in $S(m,n;d)$ is given by $\delta_{(\tau,\sigma)}=\sum_{\text{supp}(A)\subseteq \text{supp}(\tau,\sigma)}\xi_{A},$ where $(\tau,\sigma)$ is interpreted as an $(m+n)\times (m+n)$ block permutation matrix.
Here, $\text{supp}(A)$ denotes the set of pairs $(i,j)$ such that $a_{ij}\neq 0$.
\begin{definition}
    Let $\text{C}(l,d)=\{(a_{1},\dotsc,a_{l})\in\mathbb{N}^{l}\mid\sum_{i=1}^{l}a_{i}=d\}$ be the set of all weak compositions of $d$ with at most $l$ parts. Given $t=(t_{1},t_{2},\dotsc,t_{l})\in \mathbb{N}^{l}$, let $|t|=\sum_{i=1}^{l}t_{i}$.
\end{definition} Let $\xi_{\mu}=\xi_{D(\mu)}$, where $D(\mu)=\text{diag}(\mu_{1},\dotsc,\mu_{m+n})$. Then $\sum_{\mu\in \text{C}(m+n,d)}\xi_{\mu}$ is the identity element in $S(m,n;d)$. Moreover, these summands are mutually orthogonal idempotents.

\subsection{Multiplication in $S(m,n;d)$} In this short section, we record the multiplication rules of certain elements in $S(m,n;d)$ that we use frequently in the text. A general formula for the multiplication of the basis elements in $S(m,n;d)$ can be found in \cite[Lemma~5.1]{brundan2003new}. 

\begin{lemma}\label{multiplicationByTranspositions}
    Assume that $A\in M(m,n;d)$. Let $(ij)$ be the transposition in $S_m$ or $S_n$ that interchanges $i$ and $j$, and $\mathrm{Id}_{m}$ (resp. $\mathrm{Id}_{n}$) be the identity in $S_{m}$ (resp. $S_n$).  
    \begin{enumerate}
        \item Suppose $A_{i*}=A_{j*}$ for some $i,j$ with $1\leq i<j\leq m$, then we have $$\delta_{((ij),\mathrm{Id}_{n})}\xi_{A}=(-1)^{n(A_{i*})}\xi_{((ij),\mathrm{Id}_{n})A}.$$
        \item Suppose $A_{i*}=A_{j*}$ for some $i,j$ with $m+1\leq i<j\leq m+n$, then we have $$\delta_{(\mathrm{Id}_{m},(ij))}\xi_{A}=(-1)^{n(A_{i*})}\xi_{(\mathrm{Id}_{m},(ij))A}.$$
    \end{enumerate}
    Here, $((ij),\mathrm{Id}_{n})A$ and $A(\mathrm{Id}_{m},(ij))$ are matrix products.
\end{lemma}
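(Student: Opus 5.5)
Since $S(m,n;d)\cong A(m,n;d)^{*}$ (Theorem \ref{isomorphismSchur}), the product of two elements $\xi,\eta\in S(m,n;d)$ is determined by pairing with the coproduct: $(\xi\eta)(t^{B})=(\xi\otimes\eta)(\Delta t^{B})$, where $\Delta$ is extended multiplicatively to monomials with the usual Koszul sign rule $(a\otimes b)(c\otimes d)=(-1)^{|b||c|}ac\otimes bd$. So I will compute $\delta_{((ij),\mathrm{Id}_n)}\xi_A$ by evaluating it on every basis monomial $t^B$, $B\in M(m,n;d)$. I only treat part (1); part (2) is the same computation with the roles of the two diagonal blocks exchanged.

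\textbf{Step 1: which $B$ contribute.} Write $P$ for the block permutation matrix of $((ij),\mathrm{Id}_n)$. The element $\delta_{((ij),\mathrm{Id}_n)}$ is a sum of $\xi_C$ with $\mathrm{supp}(C)\subseteq\mathrm{supp}(P)$. Expand $\Delta t^{B}=\prod_{r,s}\bigl(\sum_k t_{rk}\otimes t_{ks}\bigr)^{b_{rs}}$. A term pairs nontrivially with $\xi_C\otimes\xi_A$ only if its left factors are $t_{r,\pi(r)}$ with $\pi$ the permutation of $P$, and its right factors form $t^{A}$. This forces the row $r$ of $B$ to be row $\pi(r)$ of $A$, that is, $B=PA$. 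Since $\pi$ only swaps the even indices $i,j$, every left factor $t_{r\pi(r)}$ is even. Hence exactly one $\xi_C$ contributes, and the left tensor factor carries no parity.

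\textbf{Step 2: the sign.} The coefficient of $t^{C}\otimes t^{A}$ in $\Delta t^{PA}$ comes from reordering. In $\Delta t^{PA}$ the right factors appear in the order dictated by the fixed ordering of the monomial $t^{PA}$, namely rows $1,\dots,m+n$ of $PA$. We must rewrite them in the ordering of $t^{A}$. Two sources of sign arise.

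\emph{Koszul signs.} These come from moving right factors past left factors. They vanish because all the left factors are even.

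\emph{Reordering signs.} These come from permuting the odd generators inside $t^{A}$. Rows $i$ and $j$ of $PA$ are rows $j$ and $i$ of $A$, and $A_{i*}=A_{j*}$. So the reordering amounts to swapping the block of generators $t_{i s}$ (row $i$) with the block $t_{j s}$ (row $j$), while matching equal column indices.

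The odd generators in row $i$ are the $t_{is}$ with $s$ in the super part, and there are $n(A_{i*})$ of them; the same number occurs in row $j$. Moving one block of odd elements past the other gives $(-1)^{n(A_{i*})^2}=(-1)^{n(A_{i*})}$. The even generators, and the intermediate rows, contribute no further sign, because even elements commute and the rows strictly between $i$ and $j$ are passed twice. More precisely, each such row is passed by a block of parity $n(A_{i*})$ once in each direction, so those signs cancel.

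\textbf{Step 3: multiplicity.} I also need the coefficient to be exactly $\pm1$ and not a larger integer. The entries in the super part are $0$ or $1$, so every odd generator appears at most once. The even generators can appear with powers, but their multinomial counts match on both sides, since $t^{PA}$ and $t^{A}$ have identical exponent multisets up to the row relabelling and the left factors are forced. So the coefficient is exactly $(-1)^{n(A_{i*})}$, which gives $\delta_{((ij),\mathrm{Id}_n)}\xi_A=(-1)^{n(A_{i*})}\xi_{PA}$.

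\textbf{Main obstacle.} The hard step is Step 2: fixing the sign conventions precisely (the monomial ordering in $t^{A}$, the Koszul rule in the tensor product, and the identification $S(m,n;d)\cong A(m,n;d)^{*}$ as in \cite[Lemma~5.1]{brundan2003new}), and checking that the rows between $i$ and $j$ contribute no net sign. If this bookkeeping proves awkward, I would instead specialise the general multiplication formula \cite[Lemma~5.1]{brundan2003new} to the case where one factor is $\xi_C$ with $C$ a permutation matrix. The sign would then be read off as the parity of the induced permutation of the odd positions in the strict pair $(\mathbf{i},\mathbf{j})$ representing $A$.
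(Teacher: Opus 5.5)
Your proof is correct. The paper gives no argument for this lemma; it just records it after pointing to the general multiplication formula of Brundan and Kujawa (their Lemma~5.1), so its implicit justification is exactly your fallback. Your main route instead recomputes this special case directly from the coproduct.

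It can be stated most cleanly as follows. $\delta_P$ is evaluation at the even point $P$, hence an even algebra map $A(m,n)\to\mathbb{C}$ that vanishes on odd generators. So $(\delta_P\otimes\mathrm{id})\circ\Delta$ is the algebra automorphism $t_{rs}\mapsto t_{\pi(r)s}$, and $(\delta_P\xi_A)(t^B)=\xi_A\bigl((\delta_P\otimes\mathrm{id})\Delta t^B\bigr)$. Because each generator goes to a single generator, $t^B$ goes to a single signed monomial. This makes your Step~3 immediate, since no multinomial bookkeeping arises. The sign is that of the permutation induced on the odd factors of $t^A$. When $A_{i*}=A_{j*}$ this is a product of $n(A_{i*})$ disjoint transpositions $t_{is}\leftrightarrow t_{js}$. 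Hence the sign $(-1)^{n(A_{i*})}$ does not depend on the ordering convention for $t^A$, which you flagged as the main obstacle.

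Compared with citing Brundan--Kujawa, your route is self-contained and avoids translating between strict pairs $(\mathbf{i},\mathbf{j})$ and matrices. Your Steps~1 and~3 also carry over unchanged to any block permutation matrix and any $A$, because the forced left factors $t_{r\pi(r)}$ are always even. This gives $\delta_{(\tau,\sigma)}\xi_A=\pm\xi_{(\tau,\sigma)A}$ in general. That general fact is what the paper actually uses when it defines $\sign(A,\sigma,\tau)$ and asserts it is $\pm1$, and the lemma as stated (equal rows only) does not literally cover it. What the citation buys in return is the full structure constants for arbitrary products of basis elements, which are not needed here.
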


\begin{lemma}\label{compositionMultiplication}
    For any $\mu\in C(m+n,d)$ and $A\in M(m,n;d)$, we have
        \begin{equation*}
        \xi_{A}\xi_{\mu}=
            \begin{cases}
                \xi_{A} & \text{if $\mu_{i}=|A_{*i}|$ for all $i\in [m+n]$},\\
                0 & \text{otherwise.}
            \end{cases}
        \end{equation*}
\end{lemma}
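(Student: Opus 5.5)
The plan is to compute $\xi_A\xi_\mu$ by evaluating both sides on the basis monomials $t^B$ of $A(m,n;d)$, using the product on $A(m,n;d)^{*}$ from Theorem \ref{isomorphismSchur}. That product is dual to $\Delta$, so
\[
(\xi_A\xi_\mu)(t^B)=(\xi_A\otimes\xi_\mu)(\Delta t^B),
\]
possibly with a sign convention for the dual of a tensor product of superspaces. Since $\xi_\mu$ is even, any such sign coming from $|\xi_\mu|\,|t^{\cdot}|$ is trivial, and the computation is essentially sign-free.

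First, expand $\Delta t^B$. Write $t^B$ as an ordered product of generators $t_{i_1j_1}\cdots t_{i_dj_d}$, where $(\mathbf i,\mathbf j)$ is a strict pair with $\psi(\mathbf i,\mathbf j)=B$. Since $\Delta$ is a superalgebra morphism,
\[
\Delta t^B=\sum_{\mathbf k\in I(m,n;d)}\pm\, t_{i_1k_1}\cdots t_{i_dk_d}\otimes t_{k_1j_1}\cdots t_{k_dj_d},
\]
with signs from the super-interchange law.

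Next, apply $\xi_\mu$ to the right-hand tensor factor. The element $\xi_\mu$ is nonzero only on the monomial $t^{D(\mu)}$. A product $t_{k_1j_1}\cdots t_{k_dj_d}$ is a scalar multiple of $t^{D(\mu)}$ only if $k_l=j_l$ for every $l$ and the content of $\mathbf j$ is $\mu$. The content condition says $\mu_i=\#\{l: j_l=i\}=|B_{*i}|$. In that case the only surviving term has $\mathbf k=\mathbf j$. For this term the right factor is a product of diagonal (even) generators, so it equals $t^{D(\mu)}$ exactly. The interchange signs then vanish because every generator moved past is even, and the left factor is $t^B$ itself. Hence
\[
(\xi_A\xi_\mu)(t^B)=\xi_A(t^B)
\]
if $|B_{*i}|=\mu_i$ for all $i$, and $0$ otherwise. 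This equals $\delta_{A,B}$ when $|A_{*i}|=\mu_i$ for all $i$, and $0$ for every $B$ when that condition fails, which is the claim.

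The main point requiring care is the sign bookkeeping in the second step. The expansion of $\Delta$ on a product carries signs $(-1)^{|t_{k_lj_l}||t_{i_{l'}k_{l'}}|}$, so one must check that these are all trivial on the surviving term. They are, since $t_{k_lj_l}=t_{j_lj_l}$ is even. One must also check that reordering the left factor back into the normal form of $t^B$ gives exactly $t^B$ with coefficient $+1$. This holds because the left factor is literally the chosen ordered product defining $t^B$.
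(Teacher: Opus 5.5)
Your proof is correct, but it takes a different route from the paper, which states this lemma without proof and points only to the general multiplication formula of \cite[Lemma~5.1]{brundan2003new}, of which it is a special case. You instead verify it directly from the definition of the product on $A(m,n;d)^{*}$, evaluating $\xi_A\xi_\mu$ on each monomial $t^B$ via the coproduct.

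Your route is self-contained, and it makes explicit two things the citation leaves implicit.
\begin{enumerate}
\item No signs can arise. The unique surviving term of $\Delta t^B$ has right factor built from the even diagonal generators $t_{jj}$, so no interchange sign appears. It also does not matter how $t^B$ is normalized relative to an ordered product $t_{i_1j_1}\cdots t_{i_dj_d}$: a normalization sign enters twice and cancels.
\item The condition is on column sums rather than row sums because you use the convention $(\xi\xi')(a)=(\xi\otimes\xi')(\Delta a)$. That is the convention the paper implicitly uses. It is also what makes left multiplication by $\delta_{(\tau,\sigma)}$ permute rows in Lemma~\ref{multiplicationByTranspositions}, and what makes $1\bullet\xi_A$ an eigenvector of $\delta_{x,y}$ with eigenvalue given by the column sums in Proposition~\ref{trivial}. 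With the opposite convention, the same computation would give row sums.
\end{enumerate}

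The citation route is shorter, but to use it one has to match Brundan--Kujawa's indexing by strict pairs $(\mathbf i,\mathbf j)$, and their sign normalizations of the dual basis, with the paper's $\xi_A$. Your computation avoids that bookkeeping entirely.
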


\begin{lemma}\label{idempotents}{\em \cite[Lemma 5.3]{brundan2003new}} 
$\xi_{\mu}\xi_{\nu}=\delta_{\mu,\nu}\xi_{\mu}$ for $\mu,\nu\in C(m+n,d)$, where $\delta_{\mu,\nu}$ is the Kronecker delta.
\end{lemma}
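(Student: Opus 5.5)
The quickest route is to derive the statement as a special case of Lemma~\ref{compositionMultiplication}. I would take $A=D(\mu)=\mathrm{diag}(\mu_{1},\dotsc,\mu_{m+n})$, so that $\xi_{A}=\xi_{\mu}$ by definition. First I would check that $D(\mu)$ really lies in $M(m,n;d)$. Its super part is identically zero, so the $\mathbb{B}$-entry constraint on the off-diagonal blocks holds. Its diagonal blocks have entries in $\mathbb{N}$. Its entries sum to $|\mu|=d$ because $\mu\in C(m+n,d)$.

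Next I would compute the column sums of $D(\mu)$. Since the only nonzero entry in column $i$ is the diagonal one, $|D(\mu)_{*i}|=\mu_{i}$ for every $i\in[m+n]$. Lemma~\ref{compositionMultiplication}, applied with the composition $\nu$ in place of $\mu$, then gives $\xi_{\mu}\xi_{\nu}=\xi_{\mu}$ if $\nu_{i}=\mu_{i}$ for all $i$, and $\xi_{\mu}\xi_{\nu}=0$ otherwise. Since two elements of $C(m+n,d)$ are equal exactly when they agree coordinatewise, this is $\delta_{\mu,\nu}\xi_{\mu}$, as claimed.

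If one prefers not to rely on Lemma~\ref{compositionMultiplication}, I would verify the identity directly from the definition of the product on $A(m,n;d)^{*}$. For $C\in M(m,n;d)$ one has
\[
(\xi_{\mu}\xi_{\nu})(t^{C})=(\xi_{\mu}\otimes\xi_{\nu})(\Delta t^{C}).
\]
To evaluate this, I would expand $\Delta t^{C}=\prod_{i,j}\bigl(\sum_{k} t_{ik}\otimes t_{kj}\bigr)^{c_{ij}}$ and look for the terms whose left factor is proportional to $t^{D(\mu)}$ and whose right factor is proportional to $t^{D(\nu)}$. Such a term forces every index $k$ to satisfy $i=k$ and $k=j$. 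Hence $C$ must be diagonal with $C=D(\mu)=D(\nu)$, and the unique surviving term is $t^{D(\mu)}\otimes t^{D(\mu)}$.

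The only real obstacle in this direct route is the sign. Every factor involved is a diagonal generator $t_{kk}$, which has parity $p(k)+p(k)=0$. So reordering produces no sign from either the commutation relations in $A(m,n)$ or the super tensor product. The same observation shows the coefficient is $1$, since the multinomial bookkeeping collapses when $i=j=k$ throughout. This yields $(\xi_{\mu}\xi_{\nu})(t^{C})=\delta_{\mu,\nu}\delta_{C,D(\mu)}$, which is the statement.
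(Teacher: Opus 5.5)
Your proposal is correct. The paper gives no argument of its own for this lemma; it only cites \cite[Lemma 5.3]{brundan2003new}. Your direct evaluation of $(\xi_{\mu}\otimes\xi_{\nu})(\Delta t^{C})$ is the standard proof behind that citation, namely the super analogue of Green's argument that the weight idempotents are orthogonal. You also correctly isolate the one genuinely super-specific point: no signs arise, because every diagonal generator $t_{kk}$ is even.

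Your first route, specializing Lemma~\ref{compositionMultiplication} to $A=D(\mu)$, is also valid and shorter. Be aware, though, that the paper states that lemma without proof as well. Proving it is the same coproduct calculation: requiring the right tensor factor to be diagonal forces $k=j$ in every factor $t_{ik}\otimes t_{kj}$, and all the $t_{jj}$ are even. So the direct computation is the self-contained argument. The specialization only shows that Lemma~\ref{idempotents} is a formal consequence of Lemma~\ref{compositionMultiplication}.
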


\subsection{Modules of $S(m,n;d)$} We record here a few facts in the supermodule theory of $S(m,n;d)$. Their proofs can be found in \cite{berele1987hook}. If $\lambda$ is a partition of $d$, we write $\lambda\vdash d$. Let $\lambda^{'}$ denote the patition conjugate to $\lambda$. The simple supermodules of $S(m,n;d)$ are indexed by partitions $\lambda\vdash d$ in the $(m,n)$-hook i.e., $\lambda_{m+1}\leq n$. Let $\Hi$ denote the set of all partitions of $d$ in the $(m,n)$-hook. Let $W_{\lambda}$ denote the simple $S(m,n;d)$-module corresponding to $\lambda\in\Hi$. The dimension of $W_{\lambda}$ is given by the number of \textit{semistandard supertableaux} in $[m|n]$ of shape $\lambda$ which we define below. Recall that $[m|n]$ is equipped with the ordering $$1<2<\cdots<m<\Bar{1}<\Bar{2}<\cdots<\Bar{n}.$$
\begin{definition}\cite[Definition 2.1]{berele1987hook}
    A semistandard supertableau of shape $\lambda$ is a filling of the boxes in the Young diagram of $\lambda$ using entries from $[m|n]$ such that
    \begin{enumerate}
        \item The entries are weakly increasing along rows and columns.
        \item The $\Bar{i}$'s from $[\Bar{n}]$ (resp. $j$'s from $[m]$) are strictly increasing along rows (resp. columns).
    \end{enumerate}
\end{definition}
Denote the set of all such tableaux by $\text{SSYT}(\lambda,m,n)$.
Define the weight of a box containing $i$ (resp. $i'$) in a supertableau to be $x_i$ (resp. $y_i$).
The weight $(x,y)^T$ of the supertableau $T$ is defined to be the product of the weights of its cells.
\begin{example}
$T_{1}$ is a semistandard supertableau while $T_{2}$ is not.

\[
\begin{array}{c@{\hspace{2cm}}c}
\begin{ytableau}
1 & 1 & \Bar{2} & \Bar{3} \\
2 & 3 & \Bar{2}\\
3\\
\Bar{4}
\end{ytableau}
&
\begin{ytableau}
1 & \Bar{2} & \Bar{2} \\
2 & 3 & \Bar{3}\\
\Bar{4}
\end{ytableau}
\\[6pt]
T_{1} & T_{2}
\end{array}
\]
We have $(x,y)^{T_1}=x_1^2x_2x_3y_2^2y_3y_4$.
\end{example}  

\begin{definition}[Supersymmetric polynomials]\label{supersymmetricfunctions}
Let $\Lambda(X)=\Lambda[x_{1},\dotsc,x_{m}]$ (resp. $\Lambda$) denote the algebra of symmetric polynomials (resp. symmetric functions) over the field $\mathbb{C}$. Let $$\Lambda(X,Y)=\Lambda[x_{1},\dotsc,x_{m}]\otimes_{\mathbb{C}}\Lambda[y_{1},\dotsc,y_{n}]$$ and $\Lambda(X|Y)\subset \Lambda(X,Y)$ denote the subalgebra consisting of polynomials $$f(x_{1},\dotsc,x_{m};y_{1},\dotsc,y_{n})$$ satisfying the condition that the polynomial $f(u,x_{2},\dotsc,x_{m};-u,y_{2},\dotsc,y_{n})$ is independent of $u$. Elements in $\Lambda(X|Y)$ are called supersymmetric polynomials. \cite{stembridge1985characterization} 
\end{definition}

\begin{remark}
    This definition is equivalent to the classical definition of supersymmetric polynomials given in \cite{stembridge1985characterization} and \cite{macdonald1998symmetric} under the coordinate change $y_{j}\mapsto -y_{j}$ for all $j\in [n]$.
\end{remark}

Let $\Lambda(X|Y)_{d}$ (resp. $\Lambda(X,Y)_d$) denote the subspace consisting of degree $d$ homogeneous elements in $\Lambda(X|Y)$ (resp. $\Lambda(X,Y)$). We denote $f(x_{1},\dotsc,x_{m};y_{1},\dotsc,y_{n})\in \Lambda(X,Y)$ (resp. $f(x_{1},\dotsc,x_{m})\in\Lambda(X)$) simply by $f(X,Y)$ (resp. $f(X)$). Given a polynomial $f(X,Y)\in \Lambda(X,Y)$, write $f(X,Y)_{d}$ to denote the degree $d$ homogenous part of $f(X,Y)$.
\begin{definition}(Hook Schur polynomials)\cite[Definition 6.3]{berele1987hook}\label{hookSchurdef}     Let $\lambda\in\Hi$. The hook Schur polynomial $hs_{\lambda}(X,Y)$ is defined by
$$hs_{\lambda}(X,Y)=\sum_{|\mu|+|\nu|=d}c^{\lambda}_{\mu,\nu'}s_{\mu}(X)s_{\nu}(Y).$$
\end{definition}

 A description of $hs_{\lambda}$ as a sum of weights of semistandard supertableaux is given in the following proposition.
\begin{proposition}\label{tableauxformula}{\em\cite[Theorem 6.10]{berele1987hook}}
    Let $\lambda\in\Hi$. The hook Schur polynomial $hs_{\lambda}(X,Y)$ is given by
    $$hs_{\lambda}(X,Y)=hs_{\lambda}(x_{1},\dotsc,x_{m};y_{1},\dotsc,y_{n})=\sum_{T\in \text{SSYT}(\lambda,m,n)}(x,y)^{T}.$$
\end{proposition}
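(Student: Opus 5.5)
The plan is to split each supertableau into its unbarred and barred parts, compute the generating function of each part separately, and then match the result with Definition \ref{hookSchurdef} using a symmetry of Littlewood--Richardson coefficients.

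First, the splitting. Let $T\in\text{SSYT}(\lambda,m,n)$. In the order on $[m|n]$ every unbarred letter precedes every barred letter, and entries weakly increase along rows and columns. Hence the cells of $T$ with entries in $[m]$ form an order ideal of the Young diagram of $\lambda$, that is, the diagram of a partition $\mu\subseteq\lambda$. The barred entries then fill the skew shape $\lambda/\mu$.

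Next, the two restricted fillings:
\begin{enumerate}
\item The restriction $T|_{\mu}$ has rows weakly increasing and columns strictly increasing in $[m]$. So it is an ordinary semistandard Young tableau of shape $\mu$ in the alphabet $[m]$.
\item The restriction $T|_{\lambda/\mu}$ has rows strictly increasing and columns weakly increasing in $[\Bar n]$. Transposing it gives an ordinary semistandard skew tableau of shape $\lambda'/\mu'$ in the alphabet $[\Bar n]$.
\end{enumerate}
Conversely, any such pair of tableaux glues back to a semistandard supertableau. The only comparisons across the boundary are between an unbarred entry and a barred entry, and these hold automatically. This gives a weight-preserving bijection
$$\text{SSYT}(\lambda,m,n)\;\longleftrightarrow\;\bigsqcup_{\mu\subseteq\lambda}\text{SSYT}(\mu,[m])\times\text{SSYT}(\lambda'/\mu',[\Bar n]).$$
Taking generating functions yields
$$\sum_{T}(x,y)^T=\sum_{\mu\subseteq\lambda}s_{\mu}(X)\,s_{\lambda'/\mu'}(Y).$$
Terms with $l(\mu)>m$ contribute zero, since $s_\mu(X)=0$ in $m$ variables, so the sum may run over all $\mu\subseteq\lambda$.

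Then I expand the skew Schur function as $s_{\lambda'/\mu'}=\sum_{\nu}c^{\lambda'}_{\mu',\nu}s_\nu$. The remaining step is the conjugation symmetry $c^{\lambda'}_{\mu',\nu}=c^{\lambda}_{\mu,\nu'}$, which I would justify by applying the involution $\omega$ to $s_\mu s_{\nu'}=\sum_\lambda c^{\lambda}_{\mu,\nu'}s_\lambda$, using $\omega(s_\mu)=s_{\mu'}$. Substituting gives
$$\sum_T(x,y)^T=\sum_{|\mu|+|\nu|=d}c^{\lambda}_{\mu,\nu'}s_\mu(X)s_\nu(Y)=hs_\lambda(X,Y).$$

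The main point to check carefully is the bijection in the splitting step, in particular that the unbarred cells really form a straight partition shape. The LR symmetry is standard. Since the $y$-weights here carry no signs, no sign bookkeeping arises; this matches the convention in the remark following Definition \ref{supersymmetricfunctions}.
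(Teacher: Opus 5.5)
Your proof is correct. The unbarred cells form the diagram of some $\mu\subseteq\lambda$, and splitting $T$ into an ordinary tableau of shape $\mu$ on $[m]$ and a transposed tableau of shape $\lambda'/\mu'$ on $[\Bar{n}]$ is a weight-preserving bijection. The identity $c^{\lambda'}_{\mu',\nu}=c^{\lambda}_{\mu,\nu'}$, obtained via $\omega$, then turns $\sum_{\mu\subseteq\lambda}s_\mu(X)s_{\lambda'/\mu'}(Y)$ into Definition~\ref{hookSchurdef}.

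The paper gives no argument of its own here. It only cites \cite[Theorem~6.10]{berele1987hook}, and your splitting argument is the standard proof of that fact.
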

\begin{theorem}{\em\cite[Chapter I.3, Example 24.(c)]{macdonald1998symmetric}}\label{hookSchurBasis}
    Let $\lambda\in\Hi$. Then $hs_{\lambda}$ is supersymmetric. Moreover, the set
    $$\{hs_{\lambda}(X,Y)\mid\lambda\in\Hi\}$$ forms a basis for $\Lambda(X|Y)_{d}$.
\end{theorem}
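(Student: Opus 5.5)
The plan is to realize every $hs_{\lambda}$ as the image of the Schur function $s_{\lambda}$ under one algebra homomorphism $\varphi:\Lambda\to\Lambda(X,Y)$, and then read off both claims from properties of $\varphi$. Write $\Delta:\Lambda\to\Lambda(X)\otimes\Lambda(Y)$ for the coproduct $f\mapsto f(X,Y)$, so that $\Delta s_{\lambda}=\sum c^{\lambda}_{\mu\nu}s_{\mu}(X)s_{\nu}(Y)$. Let $\omega$ be the usual involution, acting on the $Y$-variables. Set $\varphi=(\mathrm{id}\otimes\omega)\circ\Delta$. Since $\omega s_{\nu}=s_{\nu'}$ and $c^{\lambda}_{\mu\nu}$ is the same coefficient under the relabelling $\nu\leftrightarrow\nu'$ in the sum, Definition~\ref{hookSchurdef} gives exactly $hs_{\lambda}=\varphi(s_{\lambda})$, truncated to $m$ and $n$ variables. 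Because $\varphi$ is an algebra homomorphism, it is determined on the power sums by
\[
\varphi(p_k)=p_k(X)+(-1)^{k-1}p_k(Y).
\]

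\textbf{Supersymmetry.} Substitute $x_1=u$ and $y_1=-u$ in $\varphi(p_k)$. The $u$-contribution is $u^k+(-1)^{k-1}(-u)^k=u^k-u^k=0$, so each $\varphi(p_k)$ lies in $\Lambda(X|Y)$. The set of polynomials satisfying the defining condition of Definition~\ref{supersymmetricfunctions} is a subalgebra, and the $p_k$ generate $\Lambda$ over $\mathbb{C}$. Hence $\varphi(\Lambda)\subseteq\Lambda(X|Y)$, and in particular every $hs_{\lambda}$ is supersymmetric.

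\textbf{Linear independence.} Use the tableau formula of Proposition~\ref{tableauxformula}. Order monomials first lexicographically by the exponent vector in $x_1,\dots,x_m$, then by the exponent vector in $y_1,\dots,y_n$.
\begin{itemize}
\item For $\lambda\in\Hi$, the maximal $x$-exponent comes from filling row $i$ of the first $m$ rows entirely with $i$. This is forced because the entries of $[m]$ are column-strict, so the $x$-part of any tableau has at most $m$ rows and the $x$-exponent is dominated by $(\lambda_1,\dots,\lambda_m)$.
\item The cells below row $m$ must then be filled with barred letters. Since barred letters are row-strict and weakly increasing down columns, the lexicographically maximal choice puts $\bar{j}$ in column $j$. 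This is possible precisely because $\lambda_{m+1}\le n$.
\item The resulting leading monomial is $x_1^{\lambda_1}\cdots x_m^{\lambda_m}\,y_1^{\lambda'_1-m}\cdots y_n^{\lambda'_n-m}$, with exponents $\lambda'_j-m$ read as $0$ when negative. It occurs with coefficient $1$ and determines $\lambda$.
\end{itemize}
Distinct $\lambda$ therefore have distinct leading monomials, and triangularity gives linear independence of $\{hs_{\lambda}\mid\lambda\in\Hi\}$. Note also that $hs_{\lambda}=0$ for $\lambda\notin\Hi$, since then no supertableau exists. So $\varphi(\Lambda_d)$ is spanned by exactly the family in question.

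\textbf{Spanning (main obstacle).} It remains to show $\Lambda(X|Y)_d\subseteq\varphi(\Lambda_d)$. Equivalently, every supersymmetric polynomial is a polynomial in the super power sums $p_k(X)+(-1)^{k-1}p_k(Y)$. This is Stembridge's characterization (Macdonald I.3, Ex.~23), and I would invoke it, after the sign change $y\mapsto -y$ of the Remark following Definition~\ref{supersymmetricfunctions}.

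If a self-contained argument is wanted, I would proceed as follows. Take $f\in\Lambda(X|Y)_d$, write it in the $s_{\mu}(X)s_{\nu}(Y)$ basis, and consider its leading monomial in the above order. Then show that the cancellation condition at $x_i=u$, $y_j=-u$ forces this leading monomial to have the hook shape $x^{\lambda_{\le m}}y^{(\lambda'-m)_+}$ for some $\lambda\in\Hi$. Subtract the corresponding multiple of $hs_{\lambda}$ and induct. Proving that the leading monomial must have hook shape is the delicate step, which is why I would rather cite Stembridge.
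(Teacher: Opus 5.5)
Your proposal is correct, and it is a more explicit route than the paper's. The paper gives no argument for this theorem; it cites Macdonald (I.3, Ex.~24(c)), where the result rests on Stembridge's characterization.

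You instead identify $hs_\lambda$ with $\varphi(s_\lambda)$. Your $\varphi$ is exactly the paper's bracket plethysm $f\mapsto f[hp_1]$ with $hp_1=x_1+\dots+x_m+y_1+\dots+y_n$, since $\varphi(p_k)=hp_k$; this is consistent with Remark~\ref{superization}. The three parts then go as follows.
\begin{enumerate}
\item Supersymmetry is a one-line check on the generators $hp_k$.
\item Linear independence follows from your leading-monomial computation via Proposition~\ref{tableauxformula}. Both dominance arguments are correct: column-strictness for the unbarred part, row-strictness for the barred part. The extremal tableau is unique, and $x^{(\lambda_1,\dots,\lambda_m)}y^{((\lambda'_1-m)_+,\dots,(\lambda'_n-m)_+)}$ recovers $\lambda$ because $\lambda_{m+1}\le n$.
\item Only spanning needs outside input, namely that $\Lambda(X|Y)$ is generated by the $hp_k$.
\end{enumerate}
So your version isolates Stembridge's generation theorem as the single nontrivial ingredient and gives explicit leading terms. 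The paper's bare citation buys only brevity.

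Two small points of hygiene. First, apply $\omega$ before specializing to $y_1,\dots,y_n$, not on $\Lambda(Y)$ itself. The involution does not commute with truncation: $e_{n+1}(Y)=0$ but $h_{n+1}(Y)\neq 0$. Second, for the vanishing of $\varphi(s_\lambda)$ when $\lambda\notin\Hi$, you use the tableau expansion outside the range where the paper states it. The vanishing does hold: $\varphi(s_\lambda)=\sum_\mu s_\mu(X)s_{\lambda'/\mu'}(Y)$, and whenever $l(\mu)\le m$ the shape $\lambda'/\mu'$ contains a column of $\lambda_{m+1}>n$ cells, so $s_{\lambda'/\mu'}(y_1,\dots,y_n)=0$.
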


\begin{definition}
    Let $\langle\cdot,\cdot\rangle_{\Lambda(X|Y)}$ (resp. $\langle\cdot,\cdot\rangle_{\Lambda(X,Y)}$) denote the inner product in $\Lambda(X|Y)$ (resp. $\Lambda(X,Y)$) with respect to which the basis $$\{hs_{\lambda}\mid \lambda\in\Hi\}\text{ } (\text{resp. }\{s_{\lambda}(X)s_{\mu}(Y)\mid\lambda,\mu\in \text{Par}\})$$ is orthonormal.
\end{definition}

\begin{example}
 \begin{enumerate}
    \item The power sum supersymmetric polynomial is defined by
    $$hp_i (X,Y) = \sum_{j=1}^{m}x_{j}^{i}+(-1)^{i-1}\sum_{k=1}^{n}y_{k}^{i}.$$
     \item If $\lambda=(d)$, then using Definition \ref{hookSchurdef}, we have
     \begin{align*}
         hs_{(d)}(X,Y)=\sum_{i=0}^{d}h_{i}(X)e_{d-i}(Y).
     \end{align*}
     We call $hs_{(d)}(X,Y)$ the $d$th complete supersymmetric polynomial and denote it by $h_{d}(X,Y)$. 
     \item If $\lambda=(1^{d})$, again using Definition \ref{hookSchurdef},
     we have
     \begin{align*}
         hs_{(1^{d})}=\sum_{i=0}^{d}e_{i}(X)h_{d-i}(Y).
     \end{align*}
     We call $hs_{(1^{d})}(X,Y)$ the $d$th elementary supersymmetric polynomial and denote it by $e_{d}(X,Y)$. 
 \end{enumerate}
\end{example}

\subsection{Formal characters of $S(m,n;d)$-modules} In this section, we give a brief account of the formal character theory for $S(m,n;d)$. More details can be found in \cite[Section 6.5]{axtell2018schur}.  Let $V$ be a finite dimensional $S(m,n;d)$-module. Consider the matrix $D(x,y)=\text{diag}(x_{1},\dotsc,x_{m},y_{1},\dotsc,y_{n})$ where $x_{i}$'s and $y_{j}$'s are formal symbols. For $\mu\in \mathbb{N}^{m+n}$, define $$(x,y)^{\mu}=\prod_{i=1}^{m}x_{i}^{\mu_{i}}\prod_{j=1}^{n}y_{j}^{\mu_{m+j}}.$$ Define the element $$\delta_{x,y}=\delta_{D(x,y)}=\sum_{\mu\in\text{C}(m+n,d)}(x,y)^{\mu}\xi_{\mu}$$ inside $S(m,n;d)\otimes_{\mathbb{C}} \mathbb{C}[x_{1},\dotsc,x_{m},y_{1},\dotsc,y_{n}]$. Since $\sum_{\mu\in\text{C}(m+n,d)}\xi_{\mu}$ is a decomposition of the identity element of $S(m,n;d)$ into pairwise orthogonal idempotents (Lemma \ref{idempotents}), we have the weight space decomposition $$V=\bigoplus_{\mu\in\text{C}(m+n,d)}\xi_{\mu}V$$
for any $S(m,n;d)$-supermodule $V$. The formal character of $V$ is defined as
$$\ch(V)=\text{trace}(\delta_{x,y};V)=\sum_{\mu\in\text{C}(m+n,d)}\text{trace}(\xi_{\mu};V)\cdot(x,y)^{\mu}=\sum_{\mu\in\text{C}(m+n,d)}\text{dim}(\xi_{\mu}V)\cdot(x,y)^{\mu}.$$  The last equality follows from the fact that the idempotent $\xi_{\mu}$ acts on $\xi_{\mu}V$ by identity and on $\xi_{\nu}V$ by $0$ if $\nu\neq\mu$.
\begin{proposition}{\em \cite[Proposition 6.5.2]{axtell2018schur}}
    For each $\lambda\in\Hi$, we have
    $$\ch(W_{\lambda})=hs_{\lambda}(X,Y).$$
\end{proposition}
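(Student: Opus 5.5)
The statement to prove is that $\ch(W_{\lambda})=hs_{\lambda}(X,Y)$ for every $\lambda\in\Hi$. I will realise $W_\lambda$ inside the tensor superspace $\T$ through super Schur--Weyl duality (Sergeev, Berele--Regev), compute $\ch(\T)$ directly, and then extract the character of each isotypic component using Proposition~\ref{tableauxformula} and Theorem~\ref{hookSchurBasis}.

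\textbf{Step 1: the character of $\T$.} The idempotent $\xi_\mu$ acts on $\T$ as the projection onto the span of the basis tensors $e_{i_1}\otimes\cdots\otimes e_{i_d}$ whose content is $\mu$. The weight space $\xi_\mu\T$ therefore has dimension $\binom{d}{\mu}$. It follows that
$$\ch(\T)=(x_1+\cdots+x_m+y_1+\cdots+y_n)^d=hp_1^d .$$

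\textbf{Step 2: the bimodule decomposition.} By the double centralizer theorem for the semisimple algebra $\mathbb{C}[S_d]$, with commutant $S(m,n;d)$, we have
$$\T\cong\bigoplus_{\lambda\in\Hi}W_\lambda\otimes V_\lambda,$$
where the $W_\lambda$ range over the simple $S(m,n;d)$-supermodules. The index set is exactly $\Hi$: the Berele--Regev result says that $V_\lambda$ occurs in $\T$ if and only if $\lambda$ lies in the hook. Consequently
$$hp_1^d=\sum_{\lambda\in\Hi} f^\lambda\,\ch(W_\lambda).$$
This single identity does not determine the individual characters, so a further step is needed.

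\textbf{Step 3: isolating each $\ch(W_\lambda)$.} I apply the Young symmetrizer $e_T$ of a standard tableau $T$ of shape $\lambda$. Then $W_\lambda\cong \T e_T$, and its $\mu$-weight space is $\xi_\mu\T e_T$. I then count dimensions combinatorially. Berele--Regev construct a basis of $\T e_T$ indexed by semistandard supertableaux, and $\xi_\mu$ preserves that basis up to content, so $\dim\xi_\mu\T e_T$ equals the number of tableaux in $\text{SSYT}(\lambda,m,n)$ of content $\mu$. Proposition~\ref{tableauxformula} then gives $\ch(W_\lambda)=hs_\lambda$.

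\textbf{Alternative route for Step 3.} One can avoid this combinatorial count by restricting $\T$ to the even part $S(m;d)\times S(n;d)$. The super sign in \eqref{symmetricAction} twists the $S_d$-action on the odd tensor factors by the sign character. Restriction from $S_d$ to $S_k\times S_{d-k}$ then gives
$$\ch(W_\lambda)=\sum_{\mu,\nu}c^\lambda_{\mu,\nu'}\,s_\mu(X)s_\nu(Y),$$
which is Definition~\ref{hookSchurdef}.

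\textbf{Main obstacle.} The hardest step is Step 3: correctly tracking the signs in the $S_d$-action so that the odd variables contribute the conjugate shape $\nu'$. I would handle this via the twisted-restriction argument.
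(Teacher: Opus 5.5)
The paper does not prove this statement; it imports it from Axtell's Proposition~6.5.2. Your proposal therefore has to stand on its own, and it does, through the twisted-restriction argument you name at the end.

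The other parts are unused or only a pointer. The computation $\ch(\T)=hp_1^d$ and the identity $hp_1^d=\sum_\lambda f^\lambda\ch(W_\lambda)$ play no role; from Step~1 you only need that $\xi_\mu$ is the projection onto tensors of content $\mu$. The first version of Step~3 just moves the whole difficulty into a citation of a Berele--Regev weight basis of $\T e_T$.

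Written out, the argument that works is as follows.
\begin{enumerate}
\item Take $W_\lambda=\Hom_{S_d}(V_\lambda,\T)$.
\item The span $\T_k$ of basis tensors with exactly $k$ even factors is $\mathrm{Ind}_{S_k\times S_{d-k}}^{S_d}\bigl((\mathbb{C}^m)^{\otimes k}\otimes(\mathbb{C}^n)^{\otimes(d-k)}\otimes\mathrm{sgn}_{d-k}\bigr)$. This holds because $S_d$ permutes transitively the summands indexed by the positions of the even factors, and the stabilizer acts by place permutations twisted by the sign on the odd factors.
\item Frobenius reciprocity, $\Res^{S_d}_{S_k\times S_{d-k}}V_\lambda=\bigoplus c^\lambda_{\mu\rho}V_\mu\otimes V_\rho$, and classical Schur--Weyl duality (the sign twist replacing $\rho$ by $\nu=\rho'$) give $W_\lambda\cong\bigoplus_{\mu,\nu}(\tilde{W}^{m}_{\mu}\otimes\tilde{W}^{n}_{\nu})^{\oplus c^\lambda_{\mu,\nu'}}$.
\item The character of this module is $hs_\lambda$ by Definition~\ref{hookSchurdef}.
\end{enumerate}

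One correction to your wording. The algebra acting in this decomposition is the span of the $\xi_A$ with $A$ block diagonal, i.e.\ $\bigoplus_{k=0}^{d}S(m;k)\otimes S(n;d-k)$. It is not $S(m;d)\times S(n;d)$, and it is not the full even part, which also contains $\xi_A$ with an even, nonzero number of $1$'s in the super part. Since this subalgebra contains every $\xi_\mu$, computing the formal character there is legitimate.

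Compared with the citation, your route gives a self-contained proof from classical Schur--Weyl duality and the Littlewood--Richardson rule. In effect you reprove Berele--Regev's Theorem~6.11, which the paper uses later as \eqref{eq1}, and you never need Proposition~\ref{tableauxformula} for this statement.

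The same idea is even shorter without the $k$-decomposition. For $\mu=(\alpha,\beta)$ you have $\xi_\mu W_\lambda=\Hom_{S_d}(V_\lambda,\xi_\mu\T)$ and $\xi_\mu\T\cong\mathrm{Ind}_{S_\alpha\times S_\beta}^{S_d}(\mathrm{triv}\otimes\mathrm{sgn})$, with Young subgroups $S_\alpha, S_\beta$. Hence $\dim\xi_\mu W_\lambda=\langle s_\lambda,h_\alpha e_\beta\rangle$, which is exactly the coefficient of $x^\alpha y^\beta$ in $\sum_{\pi,\eta}c^\lambda_{\pi,\eta'}s_\pi(X)s_\eta(Y)$.
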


\subsection{Superplethysm}\label{plethysm} Recall~\cite[Section 2.1]{carlsson2018proof} that a $\lambda$-ring is a ring $R$ with a collection of ring homomorphisms $(p_{i}:R\xrightarrow[]{}R)_{i\in\mathbb{P}}$ satisfying the following conditions:
\begin{enumerate}
    \item $p_{n}[p_{m}[x]]=p_{nm}[x]$ for all $x\in R$ and $m,n\in\mathbb{P}$.
    \item $p_{1}[x]=x$ for all $x\in R$.
\end{enumerate}
For any partition $\mu=1^{a_{1}}2^{a_{2}}\cdots l^{a_{l}}$ written in \emph{exponential notation} (i.e., the number of times $i$ appears in $\mu$ is $a_{i}$), $p_\mu[x]$ is defined as $\prod_i p_i[x]^{a_i}$, and for any $f\in \Lambda$, 
$f[x]$ is defined as $\sum_\mu c_\mu p_\mu[x]$, where $f=\sum_\mu c_\mu p_\mu$ is the expansion of $f$ in the power sum basis. More generally, for any formal power series $F(v)=\sum_{i\geq 0}f_{i}v^{i}\in \Lambda[[v]]$ over $\Lambda$, where $f_{i}\in\Lambda$ for all $i\geq 0$, define
$$F(v)[x]=\sum_{i\geq 0}f_{i}[x]v^{i}.$$

The $\lambda$-ring structure on $\mathbb{C}[x_{1},\dotsc,x_{m},y_{1},\dotsc,y_{n}]$  defined by
\begin{enumerate}
    \item $p_{i}[x]=x^{i}$ (resp. $p_{i}\{x\}=(-1)^{i-1}x^{i}$) for $x\in\{x_{1},x_{2},\dotsc\}$
    \item $p_{i}[y]=(-1)^{i-1}y^{i}$ (resp. $p_{i}\{y\}=y^{i}$) for $y\in \{y_{1},y_{2},\dotsc\}$
    \item $p[a]=a$ (resp. $p\{a\}=a$) for $a\in \mathbb{C}$
\end{enumerate}
for all $i>0$, restricts to a $\lambda$-ring structure on $\Lambda(X|Y)$ such that the following holds:
$$p_{i}[hp_{j}]=hp_{ij}$$ for $i,j\in\mathbb{P}$.
For any $f\in \Lambda$ and $g\in \Lambda(X|Y)$, the resulting supersymmetric polynomials $f[g]$ and $f\{g\}$ are supersymmetric versions of plethysm that appears in the generalization \eqref{introLittlewood} of Littlewood's formula.
\begin{remark}\label{superization}
    The plethysm $f[g]$, where $f\in\lambda, g\in\Lambda(X|Y)$), is induced by the usual plethysm on symmetric polynomials via the superization map \cite[Definition 4.1]{haglund2005combinatorial}.
\end{remark}
\section{Superrestriction coefficients and polynomial superinduction}\label{superInduction}
The left regular action of $S(m,n;d)$ restricts to a left action of $S_{m}\times S_{n}$ on $S(m,n;d)$. Thus $S(m,n;d)$ is a $(\mathbb{C}[S_{m}\times S_{n}],S(m,n;d))$-bisupermodule.
\begin{definition}
    Define the superinduction functor $S_m\times S_n\rep\to S(m,n;d)\smod$ by
    \begin{displaymath}
        \ind V = \Hom_{S_m\times S_n}(S(m,n;d), V).
    \end{displaymath}
\end{definition}
For $f\in \Hom_{\mathbb C}(S(m,n;d), V)$, we write
\begin{displaymath}
    f = \sum_{A\in M(m,n;d)} f_A\bullet\xi_A,
\end{displaymath}
where $f_A = f(\xi_A)\in V$.
Then
\begin{displaymath}
    \ind V = \Big\{\sum_{A\in M(m,n;d)} f_A\bullet\xi_A \mid (\sigma,\tau)\cdot f_{A}=\sign(A,\sigma,\tau)f_{(\sigma,\tau)A}\Big\},
\end{displaymath}
where $\sign(A,\sigma,\tau)$ is defined by requiring $$\delta_{(\sigma,\tau)}\xi_{A}=\sign(A,\sigma,\tau)\xi_{(\sigma,\tau)A}.$$ Here $(\sigma,\tau)\in S_{m}\times S_{n}$ is interpreted as a $(m+n)\times(m+n)$ block diagonal permutation matrix, and $(\sigma,\tau)A$ is the matrix product.
It follows from Lemma~\ref{multiplicationByTranspositions} that $\sign(A,\sigma,w)=\pm 1$. The action of $S(m,n;d)$ on $\ind V$ \cite[Equation 3.4]{rosso2015towers} is given by
$$\xi\cdot f= \sum_{A\in M(m,n;d)} (-1)^{|\xi|(|f|+|\xi_{A}|)}f(\xi_{A}\xi)\bullet\xi_A,$$
for all homogeneous elements $\xi\in S(m,n;d)$ and  $f\in\ind V$; $|f|\in\mathbb{Z}_2$ denotes the homogeneous degree of $f$. 

For any $S(m,n;d)$-supermodule $W$, let $\res W$ denote the $S_{m}\times S_{n}$-representation obtained by restricting the action of $S(m,n;d)$ on $W$ to the multiplicative subgroup $S_{m}\times S_{n}$.
In other words,
\begin{displaymath}
    \res W = S(m,n;d)\otimes_{S(m,n;d)}W.
\end{displaymath}
\begin{proposition}[Frobenius reciprocity]\label{frobeniusReciprocity}
    For every representation $(\rho,U)$ of $S_{m}\times S_n$ and supermodule $(\omega,V)$ of $S(m,n;d)$, there is an even natural isomorphism:
    $$\Hom_{S(m,n;d)}(V,\ind U)\cong\Hom_{S_{m}\times S_{n}}(\res V,U).$$
\end{proposition}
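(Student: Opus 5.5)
This is the standard tensor–hom adjunction (coinduction form of Frobenius reciprocity), adapted to the super setting. I would write down explicit maps in both directions and check that they are mutually inverse, even, natural, and well defined.

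\emph{The forward map.} Given a homogeneous $S(m,n;d)$-supermodule morphism $\Phi\colon V\to \ind U$, define $\Phi^\flat\colon \res V\to U$ by
\[
\Phi^\flat(v)=\Phi(v)(1),
\]
that is, evaluation at the identity $1=\sum_{\mu}\xi_\mu$ of $S(m,n;d)$. Since $\Phi(v)\in \Hom_{S_m\times S_n}(S(m,n;d),U)$, we get
\[
\Phi(v)(\delta_{g})=\Phi(v)(\delta_g\cdot 1)=\rho(g)\Phi(v)(1).
\]
Also $\delta_g$ is even, so the sign in the module action is trivial, and $\Phi(\delta_g v)=\delta_g\cdot \Phi(v)$. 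By the displayed action formula, $(\delta_g\cdot\Phi(v))(1)=\Phi(v)(\delta_g)$, once one checks that $\sum_A f(\xi_A\xi)\bullet\xi_A$ is just the function $x\mapsto f(x\xi)$. Hence $\Phi^\flat(gv)=\rho(g)\Phi^\flat(v)$, so $\Phi^\flat$ is $S_m\times S_n$-equivariant.

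\emph{The inverse map.} Given an equivariant $\phi\colon\res V\to U$, define $\phi^\sharp(v)\in\Hom_{\mathbb C}(S(m,n;d),U)$ by
\[
\phi^\sharp(v)(\xi)=(-1)^{\epsilon}\,\phi(\xi v)
\]
for homogeneous $\xi$. The sign exponent $\epsilon$ is a product of parities, such as $|\xi|\,|v|$, chosen to match the sign convention $(-1)^{|\xi|(|f|+|\xi_A|)}$ in the action on $\ind V$. The required checks are:
\begin{itemize}
\item $\phi^\sharp(v)$ is left $S_m\times S_n$-linear. This holds because $\delta_g$ is even: $\phi(\delta_g\xi v)=\rho(g)\phi(\xi v)$.
\item $\phi^\sharp$ commutes with the $S(m,n;d)$-action up to the super sign. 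This follows from associativity, $(\xi_A\xi)v=\xi_A(\xi v)$.
\item The two maps are inverse. On one side, $(\phi^\sharp)^\flat(v)=\phi(1\cdot v)=\phi(v)$. On the other, $(\Phi^\flat)^\sharp=\Phi$ because $\Phi(v)(\xi)=(\xi\cdot\Phi(v))(1)$ up to sign, which equals $\Phi(\xi v)(1)$.
\item Both maps preserve parity, so the isomorphism is even.
\item Naturality in $U$ and $V$ is immediate from the formulas.
\end{itemize}

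\emph{Main obstacle.} The only real difficulty is the sign bookkeeping: choosing $\epsilon$ so that $\phi^\sharp$ intertwines the twisted action $\xi\cdot f$ with the supermodule-morphism rule $\phi(a m)=(-1)^{|a||\phi|}a\phi(m)$. My plan is to verify this on basis elements $\xi=\xi_B$ and $v$ homogeneous, reducing to the identity
\[
\xi_A\xi_B=\sum_C c_{AB}^C\,\xi_C
\]
with parity additivity $|\xi_C|=|\xi_A|+|\xi_B|$. I would first try $\epsilon=|\xi|(|v|+|\phi|)$ and adjust if a mismatch appears.
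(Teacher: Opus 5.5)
Your proposal is correct and takes the same route as the paper: both give explicit mutually inverse maps for the super hom--tensor adjunction. Your $\Phi^\flat(v)=\Phi(v)(1)$ is the paper's $\phi(f)(\xi\otimes v)=(-1)^{|\xi||v|}f(v)(\xi)$ once $S(m,n;d)\otimes_{S(m,n;d)}V$ is identified with $V$, and your $\phi^\sharp$ is the paper's $\psi$. With the paper's action convention the sign that works is $\epsilon=|\xi|\,|v|$, as in the paper; your first guess $|\xi|(|v|+|\phi|)$ only recovers $(\Phi^\flat)^\sharp=\Phi$ up to a factor $(-1)^{|\xi||\Phi|}$, so the adjustment you anticipated is genuinely needed for odd morphisms.
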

\begin{proof}
    This is a special case of the super version of the hom-tensor adjunction (see e.g., \cite{rosso2015towers, deligne1999notes}).
    Define the maps
    $$\phi: \Hom_{S(m,n;d)}(V,\ind U)\xrightarrow[]{}\Hom_{S_{m}\times S_{n}}(\res V,U)$$
    and 
    $$\psi:\Hom_{S_{m}\times S_{n}}(\res V,U)\xrightarrow[]{}\Hom_{S(m,n;d)}(V,\ind U)$$ as follows:
    $$\phi(f)(\xi\otimes_{S}v):=(-1)^{|\xi||v|}f(v)(\xi)$$
    and
    $$\psi(g)(v)(\xi):=(-1)^{|\xi||v|}g(\xi\otimes_{S}v)$$
    for all $f\in \Hom_{S(m,n;d)}(V,\ind U), g\in \Hom_{S_{m}\times S_{n}}(\res V,U)$, and all homogeneous elements $\xi\in S(m,n;d)$ and $v\in V$. It is a straightforward, albeit tedious, calculation to verify that both maps are well-defined, linear, even, and mutual inverses. The result then follows.   
\end{proof}

\begin{definition}
Let $\alpha\in \mathbb{N}^{m}$, $\beta\in \mathbb{N}^{n}$ and $d\in\mathbb{P}$. Let $P(\alpha,\beta;d)$ denote the subset of matrices $A=(a_{ij})$ in $M(m,n;d)$ such that
\begin{enumerate}
        \item $A_{i*}\leq A_{(i+1)*}$ (in lexicographic order) for $i\in\{1,\dotsc,m-1\}\cup \{m+1,\dotsc,m+n-1\}$. 
        \item $|A_{*j}|=\alpha_{j}$ if $1\leq j\leq m$.
        \item $|A_{*j}|=\beta_{j}$ if $1\leq j\leq n$.
\end{enumerate}
Let $M(\alpha,\beta;d)$ denote the subset of $M(m,n;d)$ consisting of matrices $A$ satisfying conditions $(2)$ and $(3)$ above.
\end{definition}
\begin{definition}\label{refinedSuperMatrices}
    Let $\alpha\in \mathbb{N}^{m}$, $\beta\in \mathbb{N}^{n}$ and $d\in\mathbb{P}$.
    \begin{enumerate}
        \item\label{def1} Let $P^{\eta_{m}}_{\eta_{n}}(\alpha,\beta;d)$ (resp. $M^{\eta_{m}}_{\eta_{n}}(\alpha,\beta;d)$) denote the set of matrices $A$ in $P(\alpha,\beta;d)$ (resp. $M(\alpha,\beta;d)$)  such that:
            for $1\leq i< j\leq m$ or $m+1\leq i< j\leq m+n$, if $A_{i*}=A_{j*}$, then $n(A_{i*})=n(A_{j*})$ is even.
        \item Let $P^{\eta_{m}}_{\epsilon_{n}}(\alpha,\beta;d)$ (resp. $M^{\eta_{m}}_{\epsilon_{n}}(\alpha,\beta;d)$) denote the set of matrices $A$ in $P(\alpha,\beta;d)$ (resp. $M(\alpha,\beta;d)$)  such that: for $1\leq i< j\leq m$ (resp. $m+1\leq i<j\leq m+n$), if $A_{i*}=A_{j*}$, then $n(A_{i*})=n(A_{j*})$ is even (resp. $n(A_{i*})=n(A_{j*})$ is odd).
        \item Let $P^{\epsilon_{m}}_{\eta_{n}}(\alpha,\beta;d)$ (resp. $M^{\epsilon_{m}}_{\eta_{n}}(\alpha,\beta;d)$) denote the set of matrices $A$ in $P(\alpha,\beta;d)$ (resp. $M(\alpha,\beta;d)$) such that: for $1\leq i< j\leq m$ (resp. $m+1\leq i<j\leq m+n$), if $A_{i*}=A_{j*}$, then $n(A_{i*})=n(A_{j*})$ is odd (resp. $n(A_{i*})=n(A_{j*})$ is even).
        \item Let $P^{\epsilon_{m}}_{\epsilon_{n}}(\alpha,\beta;d)$ (resp. $M^{\epsilon_{m}}_{\epsilon_{n}}(\alpha,\beta;d)$) denote the set of matrices $A$ in $P(\alpha,\beta;d)$ (resp. $M(\alpha,\beta;d)$)  such that: for $1\leq i< j\leq m$ or $m+1\leq i< j\leq m+n$, if $A_{i*}=A_{j*}$, then  $n(A_{i*})=n(A_{j*})$ is odd.
    \end{enumerate}
\end{definition}

\begin{proposition}\label{trivial}
    For $m,n,d\geq 1$, we have
    $$\ch(\ind (\zeta\otimes\theta))=\sum_{\substack{(\alpha,\beta)\in\mathbb{N}^{m}\times\mathbb{N}^{n}\\|\alpha|+|\beta|=d}}|P^{\zeta}_{\theta}(\alpha,\beta;d)|x^{\alpha}y^{\beta}$$
    where $\zeta\in\{\eta_{m},\epsilon_{m}\}$ and $\theta\in\{\eta_{n},\epsilon_{n}\}$.
\end{proposition}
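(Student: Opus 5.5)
The plan is to compute the weight spaces $\xi_{\mu}\cdot\ind(\zeta\otimes\theta)$ directly from the explicit description of $\ind V$ as the space of functions $f=\sum_A f_A\bullet\xi_A$ satisfying the equivariance condition $(\sigma,\tau)\cdot f_A=\sign(A,\sigma,\tau)f_{(\sigma,\tau)A}$. Here $V=\zeta\otimes\theta$ is one-dimensional, so each $f_A$ is a scalar. Write $\mu=(\alpha,\beta)$.

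First identify the weight space. By the action formula, $\xi_\mu\cdot f=\sum_A \pm f(\xi_A\xi_\mu)\bullet\xi_A$. By Lemma \ref{compositionMultiplication}, $\xi_A\xi_\mu$ equals $\xi_A$ if the column sums of $A$ are $\mu$, and is $0$ otherwise. Hence $\xi_\mu\cdot\ind V$ consists of those $f$ supported on $M(\alpha,\beta;d)$. This set is stable under left multiplication by block permutation matrices, since left multiplication permutes the rows within each block and preserves column sums.

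Next decompose $M(\alpha,\beta;d)$ into $S_m\times S_n$-orbits. Each orbit has a unique representative with rows sorted lexicographically within each diagonal block, that is, a unique element of $P(\alpha,\beta;d)$. The equivariance condition shows that $f$ on an orbit is determined by its value $f_A$ at the representative $A$. The remaining constraint comes from the stabilizer of $A$: for $g$ in the stabilizer we need $\chi(g)f_A=\sign(A,g)f_A$, where $\chi=\zeta\otimes\theta$. A nonzero value is therefore allowed exactly when $\chi$ and $\sign(A,\cdot)$ agree on the stabilizer.

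The stabilizer is generated by transpositions $(ij)$ with $A_{i*}=A_{j*}$, taken within one block. One should first check that $g\mapsto\sign(A,g)$ restricted to the stabilizer is a character; this follows from associativity $\delta_g\delta_h\xi_A=\delta_{gh}\xi_A$. It then suffices to compare $\chi$ and $\sign(A,\cdot)$ on these generating transpositions.

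By Lemma \ref{multiplicationByTranspositions}, the sign on $(ij)$ is $(-1)^{n(A_{i*})}$. The character value is $1$ for $\eta$ and $-1$ for $\epsilon$. The condition is therefore that $n(A_{i*})$ is even when the block's character is trivial and odd when it is the sign character. This is exactly membership in $P^{\zeta}_{\theta}(\alpha,\beta;d)$, so $\dim\xi_\mu\ind(\zeta\otimes\theta)=|P^\zeta_\theta(\alpha,\beta;d)|$. Summing over $\mu$ against $(x,y)^\mu=x^\alpha y^\beta$ gives the formula.

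The main obstacle is justifying that the stabilizer consistency condition reduces to the generating transpositions. This requires that $g\mapsto\sign(A,g)$ is multiplicative on the stabilizer. It also requires that $f$ defined by $f_{gA}=\chi(g)\sign(A,g)^{-1}f_A$ is well defined across the whole orbit, which needs a cocycle identity $\sign(A,gh)=\sign(hA,g)\sign(A,h)$. Both come from associativity in $S(m,n;d)$ and the fact that $\delta$ is a group embedding. A minor point to verify is that the stabilizer of a matrix with repeated rows is precisely the Young-type subgroup generated by those transpositions.
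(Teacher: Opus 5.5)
Your proposal is correct and follows essentially the same route as the paper: you reduce to orbit representatives in $P(\alpha,\beta;d)$, discard the non-admissible ones using the transposition signs of Lemma~\ref{multiplicationByTranspositions}, and read off the weight $x^{\alpha}y^{\beta}$ from column sums via Lemma~\ref{compositionMultiplication}. Your cocycle identity $\sign(A,gh)=\sign(hA,g)\sign(A,h)$ and your reduction of the stabilizer condition to generating transpositions supply the sufficiency direction, namely that every $A\in P^{\zeta}_{\theta}(\alpha,\beta;d)$ really carries a nonzero equivariant function on its orbit, which the paper only asserts.
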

\begin{proof}
We prove the identity when $\zeta=\eta_{m}$ and $\theta\in\{\eta_{n},\epsilon_{n}\}$. A similar approach works for the remaining cases. 

The element $f= \sum_{A\in M(m,n;d)}f_{A}\bullet\xi_{A}$, where $f_{A}\in\mathbb{C}$, is in  $\ind(\eta_{m}\otimes \eta_{n})$ if and only if
\begin{equation}\label{firstcompatibility}
    f_{(\tau,\sigma)A}=\sign(A,\tau,\sigma)f_{A}
\end{equation}
for every $(\tau,\sigma)\in S_m\times S_n$. Assume that $A_{j*}=A_{k*}$ for some $1\leq j<k\leq m$ with odd $n(A_{j*})=n(A_{k*})$, then,  writing $(jk)$ for the transposition that interchanges $j$ and $k$, $$\sign(A,(jk),\mathrm{Id}_{n})=(-1)^{n(A_{j*})}=-1$$ by Lemma \ref{multiplicationByTranspositions}. In that case $$f_{A}=-f_{((jk),\mathrm{Id}_{n})A}=-f_{A}$$ or, $$f_{A}=0.$$ Similarly, if $A_{j*}=A_{k*}$ for some $m+1\leq j<k\leq m+n$ with odd $n(A_{j*})=n(A_{k*})$, then
$$\sign(A,\mathrm{Id}_{m},(jk))=(-1)^{n(A_{j*})}=-1$$ 
by Lemma \ref{multiplicationByTranspositions} which again implies that $f_{A}=0$. Let $(\alpha,\beta)\in\mathbb{N}^{m}$ and $|\alpha|+|\beta|=d$. Assume that $A\in M^{\eta_{m}}_{\eta_{n}}(\alpha,\beta;d)$. Using \eqref{firstcompatibility}, there exists $(\sigma,\tau)\in S_{m}\times S_n$ and a unique $\tilde{A}\in P^{\eta_{m}}_{\eta_{n}}(\alpha,\beta;d)$ satisfying
$$f_{\tilde{A}}=f_{(\sigma,\tau)A}=\pm f_{A}.$$
Therefore, $\ind(\eta_{m}\otimes \eta_{n})$ has basis indexed by $\bigcup_{\substack{(\alpha,\beta)\in\mathbb{N}^{m}\times\mathbb{N}^{n}\\|\alpha|+|\beta|=d}}P^{\eta_{m}}_{\eta_{n}}(\alpha,\beta;d)$. Using Lemma~\ref{compositionMultiplication}, note that $1\bullet \xi_{A}$ is an eigenvector for $\delta_{x,y}$ with eigenvalue $$\prod_{i=1}^{m}x_{i}^{\alpha_{i}}\prod_{j=1}^{n}{y_{j}}^{\beta_{j}},$$ for any $A\in P^{\eta_{m}}_{\eta_{n}}(\alpha,\beta;d)$. Therefore, the coefficient of $x^{\alpha}y^{\beta}$ in $\ch(\ind(\eta_{m}\otimes \epsilon_{n}))$ is given by $|P^{\eta_{m}}_{\eta_{n}}(\alpha,\beta;d)|$. This proves the proposition when $\zeta=\eta_{m}$ and $\theta=\eta_{n}$.

The element $f = \sum_{A\in M(m,n;d)}f_{A}\bullet\xi_{A}$, where $f_{A}\in\mathbb{C}$, is in $\ind(\eta_{m}\otimes \epsilon_{n})$ if and only if
\begin{equation}\label{secondcompatibility}
    f_{(\tau,\sigma)A}=\sign(A,\tau,\sigma)\sign(\sigma)f_{A}
\end{equation}
for every $(\tau,\sigma)\in S_m\times S_n$. Assume that $A_{j*}=A_{k*}$ for some $1\leq j<k\leq m$ with odd $n(A_{j*})=n(A_{k*})$, then

$$\sign(A,(jk),\mathrm{Id}_{n})=(-1)^{n(A_{j*})}=-1$$
by Lemma \ref{multiplicationByTranspositions}. In that case $$f_{A}=-f_{((jk),\mathrm{Id}_{n})A}=-f_{A}$$ or, $$f_{A}=0.$$ On the other hand, if $A_{j*}=A_{k*}$ for some $m+1\leq j<k\leq m+n$ with even  $n(A_{j*})=n(A_{k*})$, then $$\sign((jk))\sign(A,\mathrm{Id}_{m},(jk))=-\sign(A,\mathrm{Id}_{m},(jk))=-(-1)^{n(A_{*j})}=-1$$ by Lemma \ref{multiplicationByTranspositions}, which implies that $f_{A}=0$. Let $(\alpha,\beta)\in\mathbb{N}^{m}$ and $|\alpha|+|\beta|=d$. Assume that $A\in M^{\eta_{m}}_{\epsilon_{n}}(\alpha,\beta;d)$. Using \eqref{firstcompatibility}, there exists $(\sigma,\tau)\in S_{m}\times S_n$ and a unique $\tilde{A}\in P^{\eta_{m}}_{\epsilon_{n}}(\alpha,\beta;d)$ satisfying
$$f_{\tilde{A}}=f_{(\sigma,\tau)A}=\pm f_{A}.$$
Therefore, $\ind(\eta_{m}\otimes \epsilon_{n})$ has basis indexed by $\bigcup_{\substack{(\alpha,\beta)\in\mathbb{N}^{m}\times\mathbb{N}^{n}\\|\alpha|+|\beta|=d}}P^{\eta_{m}}_{\epsilon_{n}}(\alpha,\beta;d)$. Using Lemma~\ref{compositionMultiplication}, note that $1\bullet\xi_{A}$ is an eigenvector for $\delta_{x,y}$ with eigenvalue $$\prod_{i=1}^{m}x_{i}^{\alpha_{i}}\prod_{j=1}^{n}{y_{j}}^{\beta_{j}},$$ for any $A\in P^{\eta_{m}}_{\epsilon_{n}}(\alpha,\beta;d)$. Therefore, the coefficient of $x^{\alpha}y^{\beta}$ in $\ch(\ind(\eta_{m}\otimes \epsilon_{n}))$ is given by $|P^{\eta_{m}}_{\epsilon_{n}}(\alpha,\beta;d)|$. This proves the proposition when $\zeta=\eta_{m}$ and $\theta=\epsilon_{n}$.
\end{proof}
Let $$H=\frac{\prod_{j=1}^{n}(1+y_{i})}{\prod_{i=1}^{m}(1-x_{i})}=\sum_{s\in\mathbb{N}^{m},\text{ }t\in \mathbb{B}^{n}} x^{s}y^{t}=\sum_{i\geq 0}h_{i}(X,Y)$$ and 
$$E=\frac{\prod_{i=1}^{m}(1+x_{i})}{\prod_{j=1}^{n}(1-y_{i})}=\sum_{t\in\mathbb{N}^{n},\text{ }s\in \mathbb{B}^{m}} x^{s}y^{t}=\sum_{i\geq 0}e_{i}(X,Y).$$

\begin{figure}[h]
\centering

\begin{minipage}{0.45\textwidth}
\centering
\begin{tikzpicture}[baseline=(m.center)]
\matrix (m) [matrix of math nodes,
             nodes in empty cells,
             left delimiter={[}, right delimiter={]},
             row sep=6pt, column sep=6pt] {
3 & 12 & 0 & 5 & 8 & 1 & 0 & 1 \\
7 & 0 & 4 & 2 & 6 & 0 & 1 & 0 \\
7 & 9 & 3 & 0 & 5 & 1 & 0 & 1 \\
7 & 9 & 3 & 0 & 5 & 1 & 0 & 1 \\
};

\begin{scope}[on background layer]
\fill[gray!25]
(m-1-6.north west) rectangle (m-4-8.south east);
\end{scope}
\end{tikzpicture}

\vspace{0.5em}
\[
A
\]
\end{minipage}
\hfill
\begin{minipage}{0.45\textwidth}
\centering
\begin{tikzpicture}[baseline=(m.center)]
\matrix (m) [matrix of math nodes,
             nodes in empty cells,
             left delimiter={[}, right delimiter={]},
             row sep=6pt, column sep=6pt] {
0 & 1 & 0 & 1 & 1 & 4 & 6 & 1 \\
1 & 0 & 1 & 0 & 1 & 7 & 5 & 0 \\
1 & 1 & 0 & 1 & 1 & 2 & 8 & 6 \\
};

\begin{scope}[on background layer]
\fill[gray!25]
(m-1-1.north west) rectangle (m-3-5.south east);
\end{scope}
\end{tikzpicture}

\vspace{0.5em}
\[
B
\]
\end{minipage}

\caption{Matrices $A\in\tilde{P}_{4}(5,3;(24,30,10,7,24),(3,1,3))$ and $B\in\tilde{Q}_{3}(5,3;(2,2,1,2,3),(13,19,7))$, with the super parts shaded.}
\label{fig:AB-blocks1}

\end{figure}

\begin{definition}
    Let $\tilde{P}_{k}(m,n;\alpha,\beta)$ (resp. $\tilde{Q}_{k}(m,n;\alpha,\beta)$) denote the set of $k\times (m+n)$ integer matrices $A$ such that
    \begin{enumerate}
        \item The rows are weakly increasing with respect to the lexicographic order.
        \item $a_{ij}\in \mathbb{B}$ if $j\geq m+1$ (resp. if $j\leq m$) (the submatrix $(a_{ij})_{j\geq m+1}$ (resp. $(a_{ij})_{j\leq m}$) is called the \textit{super part} of $A$).
        \item The vector of column sums is given by $(\alpha,\beta)=(\alpha_{1},\dotsc,\alpha_{m},\beta_{1},\dotsc,\beta_{n})$.
        \item If the rows $A_{i*}$ and $A_{j*}$ are equal then the number of $1$'s in the super part of $A_{i*}$ is even i.e., $n(A_{i*})$ is even.
    \end{enumerate}
    Figure~\ref{fig:AB-blocks1} illustrates the above definition for both
    $\tilde{P}_{k}(m,n;\alpha,\beta)$ and $\tilde{Q}_{k}(m,n;\alpha,\beta)$.
\end{definition}

\begin{lemma}\label{superIdentity}For $k>0$, we have
    \begin{enumerate}
        \item     $h_{k}[H]=\sum_{{\alpha\in\mathbb{N}^{m}, \beta\in\mathbb{N}^{n}}}|\tilde{P}_{k}(m,n;\alpha,\beta)|x^{\alpha}y^{\beta}$.
        \item     $h_{k}\{E\}=\sum_{{\alpha\in\mathbb{N}^{m}, \beta\in\mathbb{N}^{n}}}|\tilde{Q}_{k}(m,n;\alpha,\beta)|x^{\alpha}y^{\beta}$.
    \end{enumerate}
\end{lemma}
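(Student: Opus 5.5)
We prove part (1) in detail; part (2) follows by the same argument with the roles of $x$ and $y$, and of $[\cdot]$ and $\{\cdot\}$, interchanged. We use the generating function $\sum_{k\ge0} h_k v^k=\exp\big(\sum_{r\ge1}p_r v^r/r\big)$, so that
$$\sum_{k\geq 0}h_k[H]v^k=\exp\Big(\sum_{r\ge1}\frac{p_r[H]v^r}{r}\Big).$$

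\textbf{Step 1: write $p_r[H]$ monomial by monomial.} Expand $H=\sum_{s\in\mathbb N^m,\,t\in\mathbb B^n}x^sy^t$. Since $p_r[\cdot]$ is a ring homomorphism with $p_r[x_i]=x_i^r$ and $p_r[y_j]=(-1)^{r-1}y_j^r$, we get $p_r[x^sy^t]=(-1)^{(r-1)|t|}x^{rs}y^{rt}$. Every monomial $x^sy^t$ has coefficient $1$ in $H$, and $p_r$ fixes constants, so
$$\sum_k h_k[H]v^k=\prod_{(s,t)}\exp\Big(\sum_r \frac{(\epsilon_{t})^{r-1}(x^sy^tv)^r}{r}\Big),\qquad \epsilon_t=(-1)^{|t|}.$$

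\textbf{Step 2: evaluate the product over monomials.} For $|t|$ even, the factor is $\exp(-\log(1-x^sy^tv))=(1-x^sy^tv)^{-1}$. This allows each row $(s,t)$ to be repeated any number of times. For $|t|$ odd, the factor is $\exp(\log(1+x^sy^tv))=1+x^sy^tv$. This allows the row to be used at most once. Here $|t|$ is exactly $n(\cdot)$ of the corresponding row.

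\textbf{Step 3: match with $\tilde P_k$.} The coefficient of $v^k$ is therefore a sum over multisets of $k$ vectors $(s,t)\in\mathbb N^m\times\mathbb B^n$ in which vectors with odd $|t|$ have multiplicity at most one. Each such multiset corresponds bijectively to the matrix obtained by listing its rows in lexicographically weakly increasing order. These matrices are precisely the elements of $\tilde P_k(m,n;\alpha,\beta)$ for the appropriate $\alpha,\beta$. The monomial attached to the multiset is $\prod x^sy^t=x^\alpha y^\beta$, where $(\alpha,\beta)$ is the vector of column sums, and every term enters with coefficient $+1$. This proves (1).

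\textbf{Part (2).} Use $p_r\{x_i\}=(-1)^{r-1}x_i^r$, $p_r\{y_j\}=y_j^r$, and $E=\sum_{s\in\mathbb B^m,\,t\in\mathbb N^n}x^sy^t$. The sign now becomes $(-1)^{(r-1)|s|}$, so repeatable rows are exactly those whose super part (the $x$-part) has even weight. This matches the definition of $\tilde Q_k$.

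\textbf{Main obstacle.} The only real subtlety is justifying Step 1. We must check that plethysm by the power-sum homomorphisms can be applied termwise to the infinite series $H$, either by working degree-wise or in a completion. We must also check that the logarithm/exponential manipulation is valid in formal power series. Both hold because each graded piece involves only finitely many terms.
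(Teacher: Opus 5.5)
Your proposal is correct and follows essentially the same route as the paper. Both write $\sum_k h_k[H]v^k=\exp\bigl(\sum_{r\ge 1}p_r[H]v^r/r\bigr)$, expand $p_r[H]$ monomial by monomial with sign $(-1)^{(r-1)|t|}$, split by the parity of $|t|$ into factors $(1-x^sy^tv)^{-1}$ and $(1+x^sy^tv)$, read off the coefficient of $v^k$ as a count of lexicographically sorted row-multisets, i.e.\ of $\tilde P_k(m,n;\alpha,\beta)$, and treat part (2) analogously. The differences are minor: the paper derives the exponential form from $\sum_\lambda p_\lambda v^{|\lambda|}/z_\lambda$ instead of quoting it, and it does not discuss the formal-convergence points you flag, which are harmless since each graded piece is finite.
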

\begin{proof}
Let $\tilde H(v):=\sum_{i\geq 0}h_i v^{i}\in \Lambda[[v]]$. Then
\begin{equation}\label{superH}
    \tilde{H}(v)[H] = \sum_{d\geq 0}h_{d}[H]v^{d}.
\end{equation}
 Using the power sum expansion of complete symmetric functions \cite[Equation~2.14]{macdonald1998symmetric}, we have
\begin{displaymath}
    \tilde H (v) = \sum_\lambda (p_\lambda v^{|\lambda|})/z_\lambda,
\end{displaymath}
the sum being over all integer partitions, and $z_\lambda$ is the order of the centralizer of a permutation with cycle type $\lambda$.
Writing $\lambda=1^{a_1}2^{a_2}\dotsb$ in exponential notation gives
\begin{displaymath}
    \tilde H (v) = \prod_{i\geq 1}\sum_{a_i\geq 0} \frac{p_i^{a_i}v^{ia_{i}}}{i^{a_i}a_i!},
\end{displaymath}
whence
\begin{align*}
    \tilde H (v)[H] & = \prod_{i\geq 1}\sum_{a_i\geq 0} (p_i[H]v^{i})^{a_i} /(i^{a_i}a_i!)\\
    & = \prod_{i\geq 1}\exp (p_i[H] v^{i}/i)\\
    & = \exp\big(\sum_{i\geq 1} p_i[H] v^{i}/i\big)\\
    & = \exp\big(\sum_{i\geq 1} \sum_{s\in \mathbb N^m,\;t\in \mathbb B^n} (-1)^{(i-1)|t|}(x^s y^t v)^i/i\big)\\
    & = \exp\big(\sum_{s\in \mathbb N^m}\sum_{t\in \mathbb  B^n, |t| \text{ odd}}\sum_{i\geq 1}(-1)^{i-1}(x^s y^t v)^i/i + \sum_{s\in \mathbb N^m}\sum_{t\in \mathbb  B^n, |t| \text{ even}}\sum_{i\geq 1}(x^s y^t v)^i/i\big)\\
    & = \exp\big(\sum_{s\in \mathbb N^m}\sum_{t\in \mathbb  B^n, |t| \text{ odd}} \log(1+x^sy^t v)+\sum_{s\in \mathbb N^m}\sum_{t\in \mathbb  B^n, |t| \text{ even}}\log(1-x^sy^t v)^{-1}\big)\\
    & = \bigg(\prod_{s\in \mathbb N^m}\prod_{t\in \mathbb  B^n, |t| \text{ odd}}(1+x^sy^t v)\bigg)\bigg(\prod_{s\in \mathbb N^m}\prod_{t\in \mathbb  B^n, |t| \text{ even}}(1-x^sy^t v)^{-1}\bigg)\\
    & = \sum_{k\geq 0}\sum_{\alpha\in \mathbb N^m,\;\beta\in \mathbb N^n}|\tilde P_k(m,n;\alpha,\beta)|x^\alpha y^\beta v^{k}.
\end{align*}

Comparing the coefficients of $v^k$ in the above equation with those in \eqref{superH}, we obtain the first identity. The proof of the second identity is analogous.
\end{proof}
\begin{proposition}\label{lem2} For any $m,n\geq 0$ and $d>0$, we have
$$\ch(\ind(\eta_{m}\otimes \eta_{n}))=(h_{m}[H]h_{n}\{E\})_{d}.$$
\end{proposition}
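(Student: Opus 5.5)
By Proposition~\ref{trivial} with $\zeta=\eta_m$ and $\theta=\eta_n$, the left side equals
\[
\sum_{|\alpha|+|\beta|=d}|P^{\eta_m}_{\eta_n}(\alpha,\beta;d)|\,x^\alpha y^\beta .
\]
By Lemma~\ref{superIdentity}, the degree $d$ part of $h_m[H]h_n\{E\}$ has coefficient of $x^\alpha y^\beta$ equal to
\[
\sum |\tilde P_m(m,n;\alpha^{(1)},\beta^{(1)})|\cdot|\tilde Q_n(m,n;\alpha^{(2)},\beta^{(2)})|,
\]
where the sum runs over all splittings $\alpha=\alpha^{(1)}+\alpha^{(2)}$ and $\beta=\beta^{(1)}+\beta^{(2)}$. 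So the proof reduces to exhibiting, for each $(\alpha,\beta)$, a bijection
\[
P^{\eta_m}_{\eta_n}(\alpha,\beta;d)\longrightarrow\bigsqcup_{\alpha^{(1)}+\alpha^{(2)}=\alpha,\;\beta^{(1)}+\beta^{(2)}=\beta}\tilde P_m(m,n;\alpha^{(1)},\beta^{(1)})\times\tilde Q_n(m,n;\alpha^{(2)},\beta^{(2)}).
\]

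The bijection is the obvious block splitting. Given $A\in P^{\eta_m}_{\eta_n}(\alpha,\beta;d)$, let $A^{\mathrm{top}}$ be its first $m$ rows, an $m\times(m+n)$ matrix, and $A^{\mathrm{bot}}$ its last $n$ rows, an $n\times(m+n)$ matrix. I then check the defining conditions one by one.
\begin{itemize}
\item In $A^{\mathrm{top}}$, the entries in columns $j\geq m+1$ lie in the off-diagonal block, hence in $\mathbb B$. This matches condition (2) for $\tilde P_m$. Similarly, in $A^{\mathrm{bot}}$ the entries in columns $j\leq m$ lie in $\mathbb B$, as required for $\tilde Q_n$.
\item Condition (1) of $P(\alpha,\beta;d)$ says the rows are lexicographically weakly increasing within each block of rows separately. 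This is exactly condition (1) for both pieces.
\item The parity condition in Definition~\ref{refinedSuperMatrices}(1) says that equal rows within the same block have an even number of $1$'s in the super part. This is condition (4) for both pieces; the super part of a row of $A$ is exactly the super part of the corresponding row of the piece.
\item The column sums add: $A^{\mathrm{top}}$ has column-sum vector $(\alpha^{(1)},\beta^{(1)})$ and $A^{\mathrm{bot}}$ has $(\alpha^{(2)},\beta^{(2)})$, and these sum to $(\alpha,\beta)$.
\end{itemize}
Conversely, stacking any pair from the right side gives a matrix in $M(m,n)$ with the required block types, of total sum $|\alpha|+|\beta|=d$, satisfying all conditions. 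So the map is a bijection.

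Summing over $(\alpha,\beta)$ with $|\alpha|+|\beta|=d$ then matches the degree $d$ homogeneous component, since $\deg x^\alpha y^\beta=|\alpha|+|\beta|$. This proves the identity.

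The only delicate point is bookkeeping, not mathematics, and I would settle it before writing the proof. I must confirm that the row order conditions, the parity conditions and the column sum indexing in the definitions align as described. In particular, the third column condition of $P(\alpha,\beta;d)$ should read $|A_{*(m+j)}|=\beta_j$. I must also check that the super parts coincide with the sets where $\tilde P$ and $\tilde Q$ require $\mathbb B$ entries. If the paper's lexicographic convention differed between $P$ and $\tilde P$, I would note that either convention picks one representative per $S_m\times S_n$ orbit, so the counts agree regardless.
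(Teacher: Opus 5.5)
Your proposal is correct and is essentially the paper's proof: the paper also combines Proposition~\ref{trivial} with Lemma~\ref{superIdentity} and multiplies the two generating functions. It leaves implicit the splitting of $A\in P^{\eta_m}_{\eta_n}(\alpha,\beta;d)$ into its top $m$ rows (an element of $\tilde P_m$) and bottom $n$ rows (an element of $\tilde Q_n$), which you make explicit, and your reading of the third column condition as $|A_{*(m+j)}|=\beta_j$ is the intended one.
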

\begin{proof}
Using Lemma \ref{superIdentity}, we obtain
\begin{align}\label{1}
    h_{m}[H]=\sum_{\alpha\in\mathbb{N}^{m},\text{ }\beta\in\mathbb{N}^{n}}|\tilde{P}_{m}(m,n;\alpha,\beta)|x^{\alpha}y^{\beta}
\end{align} and 
\begin{align}\label{2}
    h_{n}\{E\}=\sum_{\alpha\in\mathbb{N}^{m},\text{ } \beta\in\mathbb{N}^{n}}|\tilde{Q}_{n}(m,n;\alpha,\beta)|x^{\alpha}y^{\beta}.
\end{align}
Multiplying \eqref{1} and $\eqref{2}$ and using Proposition \ref{trivial}, we obtain the result. 
\end{proof}

Assume that $S_m$ and $S_n$ act on the sets $U$ and $V$ respectively. Then $S_{m}\times S_n$ acts on $U\times V$ and one can consider the permutation representation $\mathbb{C}[U\times V]\cong \mathbb{C}[U]\otimes \mathbb{C}[V]$ of $S_{m}\times S_{n}$. 
If the actions of $S_{m}$ and $S_{n}$ on $U$ and $V$ are transitive, the action of $S_{m}\times S_{n}$ on $U\times V$ is also transitive.  We have the following description of $\ind \mathbb{C}[U\times V]$.

\begin{lemma}\label{lemma:perm-induction}
    Assume that there are transitive actions of $S_{m}$ on $U$ and of $S_{n}$ on $V$.
    Fix $(u_{0},v_{0})\in U\times V$ and let $\stab(u_0, v_0)$ denote the stabilizer of $(u_0,v_0)$ for the action of $S_{m}\times S_n$ on $\mathbb{C}[U\times V]$.
    Then we have
    \begin{multline*}\ind (\mathbb{C}[U\times V])\cong \Big\{\sum_{A\in M(m,n;d)}c_{A}\bullet\xi_{A}\mid c_{A}\in \mathbb{C},\\ c_{A}=\sign(A,\tau,\sigma)c_{(\tau,\sigma)A}\text{ for all }(\tau,\sigma)\in \stab(u_{0},v_{0})\Big\}\subset S(m,n;d)^{*}\end{multline*} as $S(m,n;d)$-supermodules. The $\mathbb{Z}_2$-grading of the supermodule on the right hand side of the isomorphism is induced from the $\mathbb{Z}_2$ grading of $S(m,n;d)^{*}\cong A(m,n;d)$.
\end{lemma}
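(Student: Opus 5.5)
The plan is to build an explicit isomorphism by evaluating at the base point $(u_0,v_0)$, in the spirit of the classical statement that $\Hom_G(M,\mathbb{C}[G/K])\cong (M^*)^K$. Write $G=S_m\times S_n$ and $K=\stab(u_0,v_0)$, and identify $U\times V$ with $G/K$ via $g\mapsto g\cdot(u_0,v_0)$. Take $F\in\ind(\mathbb{C}[U\times V])=\Hom_{G}(S(m,n;d),\mathbb{C}[U\times V])$. Define $\Phi(F)\in S(m,n;d)^{*}$ by letting $\Phi(F)(\xi)$ be the coefficient of the basis vector $(u_0,v_0)$ in $F(\xi)$. In the notation of the paper this reads $\Phi(F)=\sum_A c_A\bullet\xi_A$ with $c_A=[(u_0,v_0)]F(\xi_A)$.

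First I check that $\Phi(F)$ lies in the right-hand side. By $G$-equivariance, $F(\delta_{(\tau,\sigma)}\xi_A)=(\tau,\sigma)F(\xi_A)$. Since $\delta_{(\tau,\sigma)}\xi_A=\sign(A,\tau,\sigma)\xi_{(\tau,\sigma)A}$, we get
\[
\sign(A,\tau,\sigma)F(\xi_{(\tau,\sigma)A})=(\tau,\sigma)F(\xi_A).
\]
Take the $(u_0,v_0)$-coefficient with $(\tau,\sigma)\in K$. The right side has the same coefficient as $F(\xi_A)$, because $(\tau,\sigma)^{-1}$ fixes $(u_0,v_0)$. Using $\sign=\pm1$, this gives $c_A=\sign(A,\tau,\sigma)c_{(\tau,\sigma)A}$.

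Next I construct the inverse map. Given $c=\sum c_A\bullet\xi_A$ satisfying the $K$-condition, define
\[
F_c(\xi)=\sum_{g\in G/K} c(\delta_{g^{-1}}\xi)\, g\cdot(u_0,v_0).
\]
This is well defined on cosets precisely because the $K$-condition says $c(\delta_k\xi)=c(\xi)$ for $k\in K$. Here I extend $c$ linearly and use that $k\mapsto\delta_k$ is multiplicative, so the condition on the basis $\xi_A$ gives invariance on all of $S(m,n;d)$.

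Then I verify the two maps are mutually inverse and that $F_c$ is $G$-equivariant. For equivariance, reindex the sum over cosets, using $\delta_{g^{-1}}\delta_h=\delta_{g^{-1}h}$. The composite $\Phi(F_c)=c$ is immediate from the $g=K$ term. For $F_{\Phi(F)}=F$, compare $(g(u_0,v_0))$-coefficients using equivariance of $F$. Transitivity of the $G$-action on $U\times V$ ensures every basis vector has the form $g(u_0,v_0)$.

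The main obstacle is compatibility with the $S(m,n;d)$-action and the grading. The action on $\ind$ is $\xi\cdot f=\sum_A(-1)^{|\xi|(|f|+|\xi_A|)}f(\xi_A\xi)\bullet\xi_A$, which carries signs, and the intended action on the right side is the analogous one on $S(m,n;d)^*$. Since $\Phi$ only post-composes with the $\mathbb{C}$-linear functional "coefficient of $(u_0,v_0)$", which commutes with right multiplication inside the argument, the formulas match term by term. The only care needed is that parities agree. I would grade the right side so that $\xi_A^*$ has parity $|\xi_A|$, as induced from $A(m,n;d)$. Then the $\xi_A$-component of $\Phi(F)$ has the same parity as that of $F$, since $\mathbb{C}[U\times V]$ is purely even. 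With this grading, $\Phi$ is even and the sign factors coincide, which completes the isomorphism of supermodules.
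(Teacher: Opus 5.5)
Your proposal is correct and follows essentially the same route as the paper: your map $\Phi$, taking the coefficient of $(u_0,v_0)$ in $F(\xi_A)$, is exactly the paper's $\psi$, and injectivity comes from transitivity in both arguments. Your coset-sum inverse $F_c(\xi)=\sum_{gK} c(\delta_{g^{-1}}\xi)\,g\cdot(u_0,v_0)$ and your parity check fill in two steps the paper only asserts, namely that the image is the whole stated subspace and that $\psi$ is an even map compatible with the $S(m,n;d)$-action.
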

\begin{proof}
    Let $B=\{\xi_{A}\mid A\in M(m,n;d)\}$. Consider the following isomorphism of superspaces $$\phi:\Hom_{\mathbb{C}}(S(m,n;d),\mathbb{C}[U\times V])\xrightarrow[]{\cong} \Hom_{\mathbb{C}}(U\times V\times B,\mathbb{C})$$
    defined as follows: let
    $$f=\sum_{A\in M(m,n;d)}\big(\sum_{(u,v)\in U\times V}c_{u,v,A}\cdot(u,v)\big)\bullet \xi_{A}\in\Hom_{\mathbb{C}}(S(m,n;d),\mathbb{C}[U\times V]),$$ then
    $$\phi(f)(u,v,\xi_{A}):=c_{u,v,A}$$ for all $u,v\in U\times V$ and $A\in M(m,n;d)$.
    With respect to this identification, the superspace $\Hom_{S_{m}\times S_{n}}(S(m,n;d),\mathbb{C}[U\times V])$ consists of elements of the form $$\sum_{\substack{(u,v)\in U\times V\\A\in M(m,n;d)}} c_{u,v,A}\bullet(u,v,\xi_{A})$$ satisfying
    \begin{equation}\label{eq: identificationCondition}
        c_{\tau.u,\sigma.v,{(\tau,\sigma)A}}=\sign(A,\tau,\sigma)c_{u,v,{A}}.
    \end{equation}
    Define a left action of $S(m,n;d)$ on $\Hom_{\mathbb{C}}(U\times V\times B,\mathbb{C})$ by requiring
    $$\xi\cdot\phi(f)=\phi(\xi\cdot f)$$ for all $\xi\in S(m,n;d)$ and $f\in \Hom_{\mathbb{C}}(S(m,n;d),\mathbb{C}[U\times V])$. 
    
    Fix $(u_{0},v_{0})\in U\times V$. Then \eqref{eq: identificationCondition} implies that
    \begin{align}\label{compatibility}
c_{u,v,{A}}=\sign((\tau_{0}^{-1},\sigma_{0}^{-1})A,\tau_{0},\sigma_{0})c_{u_{0},v_{0},{(\tau_{0}^{-1},\sigma_{0}^{-1})A}}
    \end{align} if $(u,v)=(\tau_{0},\sigma_{0}).(u_{0},v_{0})$. Define the map $\psi:\ind \mathbb{C}[U\times V]\xrightarrow[]{}\Hom_{\mathbb{C}}(S(m,n;d),\mathbb{C})$ by setting $\psi(f)(\xi_{A})=f(u_{0},v_{0},\xi_{A})$. Clearly this map is compatible with $S_{m}\times S_{n}$ action on both sides. Moreover, $\psi$ preserves the action of $S(m,n;d)$. Equation \eqref{compatibility} implies that $\psi$ is injective. Finally, the image of $\psi$ is given by $$\Big\{\sum_{A\in M(m,n;d)}c_{A}\bullet\xi_{A} \;\Big| \; c_{A}\in \mathbb{C}, c_{A}=\sign(A,\tau,\sigma)c_{(\tau,\sigma)A}\text{ for all }(\tau,\sigma)\in \text{Stab}(u_{0},v_{0})\Big\},$$
     which completes the proof of the lemma.
\end{proof}

\begin{definition}[Partition permutation representations]
Given a partition $\lambda$ of a positive integer $n$, let
\begin{displaymath}
    X_\lambda = \{(S_1, S_2, \dotsc, S_{l(\lambda)})\mid S_1\sqcup S_2\sqcup \dotsb \sqcup S_{l(\lambda)} = [n], \; |S_i|=\lambda_i \text{ for } 1\leq i\leq l(\lambda)\},
\end{displaymath}
the set of all ordered set partitions of $[n]$ with block sizes $\lambda_1,\lambda_2,\dotsc,\lambda_{l(\lambda)}$.
The action of $S_n$ on $[n]$ induces an action of $S_n$ on $X_\lambda$. This gives rise to the permutation representation $\mathbb C[X_\lambda]$ of $S_n$ whose Frobenius characteristic is $h_\lambda$ (see e.g.,~\cite[Exercise 5.5.5]{prasad2015representation}). We call $\mathbb{C}[X_{\lambda}]$ the \textit{partition permutation representation} of $S_n$ corresponding to the partition $\lambda$.
\end{definition}
\begin{proposition}\label{permutationmodules}
    If $\rho_{m}$ is a representation of $S_m$ and $\rho_n$ is a representation of $S_n$, then we have
    $$\ch(\ind(\rho_{m}\otimes \rho_{n}))=(\fr(\rho_{m})[H]\fr(\rho_{n})\{E\})_{d}$$
    where $\fr(\rho_m)$ denotes the Frobenius characteristic of $\rho_m$.
\end{proposition}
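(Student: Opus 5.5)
The plan is to reduce the statement to partition permutation representations and then use linearity. Both sides of the identity are additive in $\rho_m$ and in $\rho_n$. On the left, $\ind$ is a Hom functor, so it commutes with finite direct sums, and $\ch$ is additive. On the right, $f\mapsto f[H]$ and $g\mapsto g\{E\}$ are linear, and taking the degree $d$ part is linear. The Frobenius characteristics $h_\lambda$ of the partition permutation representations $\mathbb C[X_\lambda]$ form a basis of $\Lambda_m$. We would therefore first prove the identity for $\rho_m\otimes\rho_n=\mathbb C[X_\lambda]\otimes\mathbb C[X_\mu]$ with $\lambda\vdash m$, $\mu\vdash n$. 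We would then extend it to all $\rho_m,\rho_n$ by expressing characters in the $h_\lambda$ basis. Negative coefficients occur here, so we argue in the Grothendieck group: the map $[\rho]\mapsto \ch(\ind\rho)$ is additive, hence extends to virtual representations, and both sides are linear functions that agree on a basis.

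For $U=X_\lambda$ and $V=X_\mu$, both actions are transitive. Fix base points $u_0,v_0$, chosen as consecutive-block set partitions. Then $\stab(u_0,v_0)=S_\lambda\times S_\mu$ is a product of Young subgroups, $S_{\lambda_1}\times\cdots\times S_{\mu_1}\times\cdots$. Lemma~\ref{lemma:perm-induction} identifies $\ind\mathbb C[X_\lambda\times X_\mu]$ with the space of coefficient vectors $(c_A)$ satisfying $c_A=\sign(A,\tau,\sigma)c_{(\tau,\sigma)A}$ for $(\tau,\sigma)\in S_\lambda\times S_\mu$. Write a matrix $A\in M(m,n;d)$ as a stack of row blocks. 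There are blocks of sizes $\lambda_1,\lambda_2,\dots$ among the first $m$ rows and blocks of sizes $\mu_1,\mu_2,\dots$ among the last $n$ rows. The stabilizer permutes rows within each block independently. Repeating the argument of Proposition~\ref{trivial} blockwise gives two facts:
\begin{itemize}
\item If two equal rows in the same block have odd $n(A_{i*})$, Lemma~\ref{multiplicationByTranspositions} forces $c_A=0$.
\item Otherwise each orbit contributes exactly one basis vector, represented by the matrix whose rows are lex-sorted within each block.
\end{itemize}
By Lemma~\ref{compositionMultiplication}, each basis vector $1\bullet\xi_A$ is a weight vector of weight $x^{\alpha}y^{\beta}$, where $(\alpha,\beta)$ is the vector of column sums. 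The character therefore counts tuples of blocks. The $i$th top block is an element of $\tilde P_{\lambda_i}(m,n;\cdot,\cdot)$ and the $j$th bottom block is an element of $\tilde Q_{\mu_j}(m,n;\cdot,\cdot)$. The column-sum weights multiply across blocks, with the total entry sum equal to $d$.

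By Lemma~\ref{superIdentity}, the generating function of this count is the degree $d$ part of $\prod_i h_{\lambda_i}[H]\prod_j h_{\mu_j}\{E\}$. Since plethysm by a fixed argument is a ring homomorphism $\Lambda\to\Lambda(X|Y)$ (the maps $p_i[\cdot]$ are ring homomorphisms), this product equals $h_\lambda[H]\,h_\mu\{E\}$. Hence $\ch(\ind(\mathbb C[X_\lambda]\otimes\mathbb C[X_\mu]))=(h_\lambda[H]h_\mu\{E\})_d$. This is precisely the claim for that pair, since $\fr(\mathbb C[X_\lambda])=h_\lambda$. The case $\lambda=(m)$, $\mu=(n)$ recovers Proposition~\ref{lem2}.

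The main obstacle is the blockwise sign bookkeeping. We must check that the $\sign(A,\tau,\sigma)$ cocycle restricted to a product of Young subgroups factors as a product over blocks. We must also check that the vanishing and one-representative-per-orbit analysis from Proposition~\ref{trivial} goes through unchanged. In particular, we need that a full orbit under $S_{\lambda_i}$ contributes a nonzero vector exactly when every pair of equal rows has even $n(\cdot)$, with no further cancellation from longer permutations. Our first approach is to write $\sign(A,\tau,\sigma)$ as a product over transpositions using Lemma~\ref{multiplicationByTranspositions}. Since it is a cocycle, consistency reduces to checking the stabilizer of $A$, which is generated by transpositions of equal rows. Once that is settled, the remaining steps are formal.
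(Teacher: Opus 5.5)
Your proposal is correct and follows essentially the same route as the paper: you prove the identity for $\mathbb{C}[X_\lambda]\otimes\mathbb{C}[X_\mu]$ via Lemma~\ref{lemma:perm-induction} with the Young stabilizer $S_\lambda\times S_\mu$, count matrices that are lex-sorted within each row block as products of $\tilde P$- and $\tilde Q$-counts, apply Lemma~\ref{superIdentity}, and extend by linearity in the $h$-basis. Two of your additions make explicit points the paper only asserts ("proceeding as in the proof of Proposition~\ref{trivial}", and "uniquely expressed" in terms of permutation modules): the cocycle argument, in which the sign restricts to a homomorphism on $\stab(A)$, a group generated by transpositions of equal rows within a block; and the Grothendieck-group formulation of the linearity step.
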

\begin{proof}
    We first prove the result for partition permutation representations, i.e., we show that
    for $\mu\vdash m$ and $\nu\vdash n$, 
    \begin{equation}\label{charPermutation}
        \ch(\ind(\mathbb{C}[X_{\mu}]\otimes \mathbb{C}[X_{\nu}]))=(h_{\mu}[H]h_{\nu}\{E\})_{d}.
    \end{equation}
Let $l(\mu)=l$ and $l(\nu)=p$. Let $(u_{0},v_{0})\in X_{\mu}\times X_{\nu}$ be the element where 
    $$u_{0}=(\{1,2,\dotsc,\mu_{1}\},\{\mu_{1}+1,\dotsc,\mu_{1}+\mu_{2}\},\dotsc,\{\mu_{1}+\dotsc\mu_{l-1}+1,\dotsc,\mu_{1}+\dotsc\mu_{l-1}+\mu_{l}\})$$ 
    and 
    $$v_{0}=(\{1,2,\dotsc,\nu_{1}\},\{\nu_{1}+1,\dotsc,\nu_{1}+\nu_{2}\},\dotsc,\{\nu_{1}+\dotsc\nu_{p-1}+1,\dotsc,\nu_{1}+\dotsc\nu_{p-1}+\nu_{p}\}).$$
The stabilizer of $(u_{0},v_{0})$ for the action of $S_{m}\times S_{n}$ is $$S_{\mu}\times S_{\nu}= (S_{\mu_{1}}\times \cdots\times S_{\mu_{l}})\times (S_{\nu_{1}}\times \cdots \times S_{\nu_{p}})\subset S_{m}\times S_{n}.$$
Then Lemma~\ref{lemma:perm-induction} implies that 
\begin{multline*}
    \ind(\mathbb{C}[X_{\mu}\times X_{\nu}])=\{\sum_{A\in M(m,n;d)}f_{A}\bullet\xi_{A}\mid f_{A}\in\mathbb{C},   f_{A}=\sign(A,\tau,\sigma )f_{(\tau,\sigma)A}\\ \text{for all }(\tau,\sigma)\in S_{\mu}\times S_{\nu}\}.
\end{multline*}
Note that $S_{\mu}\times S_{\nu}$ permutes the rows inside consecutive blocks of sizes $\mu_{k}\times (m+n)$ and $\nu_{t}\times (m+n)$ of $A\in M(m,n;d)$, where $k\in [l]$ and $t\in[p]$. Proceeding as in the proof of Proposition \ref{trivial}, we obtain
\begin{equation}\label{permutationCharacter}
    \ind(\mathbb{C}[X_{\mu}\times X_{\nu}])=\sum_{\substack{(\alpha,\beta)\in\mathbb{N}^m \times\mathbb N^n\\|\alpha|+|\beta|=d}} |P^{\zeta}_{\theta}(\alpha,\beta;\mu,\nu;d)|x^{\alpha}y^{\beta},
\end{equation}
where $P^{\zeta}_{\theta}(\alpha,\beta;\mu,\nu;d)\subset M^{\zeta}_{\theta}(\alpha,\beta;d)$ consists of matrices $A\in M^{\zeta}_{\theta}(\alpha,\beta;d)$ such that
$$A_{i*}\leq A_{(i+1)*} \text{ } (\text{with respect to the lexicographic ordering})$$ for all $i\in \bigcup_{k=0}^{l-1}\{\mu_{k}+1,\cdots,\mu_{k+1}\} \cup\bigcup_{t=0}^{p-1}\{\nu_{t}+1,\cdots,\nu_{t+1}\}$, where $\mu_{0}=\nu_{0}:=0$, i.e., the rows of $A$ weakly increase inside consecutive blocks of sizes $\mu_{k}\times (m+n)$ ($1\leq k\leq l$) and $\nu_{t}\times (m+n)$ ($1\leq t\leq p$). The right hand side of \eqref{permutationCharacter} is the degree $d$ homogeneous part of
\begin{align*}
    &\sum_{(\alpha,\beta)\in\mathbb{N}^m \times\mathbb N^n} x^{\alpha}y^{\beta}\sum_{\substack{\sum_{k=1}^{l}a_{k}+\sum_{t=1}^{p}c_{t}=\alpha,\\\sum_{k=1}^{l}b_{k}+\sum_{t=1}^{p}d_{t}=\beta}}\prod_{k=1}^{l}|\tilde{P}_{\mu_{k}}(m,n;a_k ,b_k)|\prod_{t=1}^{p}|\tilde{Q}_{\nu_{t}}(m,n;c_t ,d_t)|\\
    =& \bigg(\prod_{k=1}^{l}\sum_{(a_k, b_k)\in\mathbb{N}^m \times\mathbb N^n}|\tilde{P}_{\mu_{k}}(m,n;a_{k},b_{k})|x^{a_k}y^{b_k}\bigg)\bigg(\prod_{t=1}^{p}\sum_{(c_t, d_t)\in\mathbb{N}^m \times\mathbb N^n}|\tilde{Q}_{\nu_{t}}(m,n;c_t ,d_t)|x^{c_t}y^{d_t}\bigg)\\
    =&\prod_{k=1}^{l} h_{\mu_{k}}[H]\prod_{t=1}^{p} h_{\nu_{t}}\{E\} = h_{\mu}[H]h_{\nu}\{E\},
\end{align*}
where the second equality follows from Lemma \ref{lem2}. This proves the proposition when $\rho_m$ and $\rho_n$ are partition permutation modules.

Since the set of homogeneous symmetric functions forms a basis for the algebra of symmetric functions, $\rho_m$ and $\rho_n$ can be uniquely expressed in terms of $\mathbb{C}[X_{\mu}]$, $\mu\vdash m$ and $\mathbb{C}[X_{\nu}]$, $\nu\vdash n$, respectively. Hence, $\rho_m\otimes \rho_n$ can be expanded uniquely in terms of $\mathbb{C}[X_{\pi}]\otimes \mathbb{C}[X_{\eta}]$, where $\pi\vdash m$ and $\eta\vdash n$. Now apply \eqref{charPermutation} to each such $\mathbb{C}[X_{\mu}]\otimes \mathbb{C}[X_{\nu}]$ summand to obtain the result.
\end{proof}

When $\rho_m$ and $\rho_n$ are chosen to be Specht modules in Proposition \ref{permutationmodules}, we have the following corollary.

\begin{corollary}\label{inductionFromula}
For $\mu\vdash m$ and $\nu\vdash n$, we have
$$\ch(\ind(V_{\mu}\otimes V_{\nu}))=(s_{\mu}[H]s_{\nu}\{E\})_{d}.$$
\end{corollary}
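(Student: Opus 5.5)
This is a direct specialization of Proposition~\ref{permutationmodules}: we take $\rho_m=V_\mu$ and $\rho_n=V_\nu$. The Frobenius characteristic of the Specht module $V_\mu$ is the Schur function $s_\mu$, and likewise $\fr(V_\nu)=s_\nu$. Substituting these into the formula of Proposition~\ref{permutationmodules} gives
\[
\ch(\ind(V_\mu\otimes V_\nu))=(s_\mu[H]\,s_\nu\{E\})_d,
\]
which is the claim.

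The only real point is that Proposition~\ref{permutationmodules} was established for arbitrary representations, whereas the Specht modules are not permutation modules themselves. The extension to arbitrary representations rests on two facts.

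First, both sides of the identity are additive in each variable, and this additivity extends to virtual combinations.
\begin{itemize}
\item On the right-hand side, the maps $f\mapsto f[H]$ and $g\mapsto g\{E\}$ are linear in $f$ and $g$, because plethysm is defined through the power-sum expansion and is linear in its outer argument.
\item On the left-hand side, $\ind$ is a $\Hom$-functor out of $S(m,n;d)$ into representations of the semisimple algebra $\mathbb{C}[S_m\times S_n]$, so it is additive on direct sums. Formal characters are additive on direct sums as well.
\end{itemize}

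Second, we need to expand $V_\mu\otimes V_\nu$ in permutation modules. By the Jacobi--Trudi identity, $s_\mu=\det(h_{\mu_i-i+j})$ is an integer combination of the $h_\pi$. Hence $[V_\mu]$ is a virtual $\mathbb{Z}$-combination of the classes $[\mathbb{C}[X_\pi]]$ in the representation ring, and the same holds for $V_\nu$. We then interpret the identity $\sum_\pi a_\pi[\mathbb{C}[X_\pi]]=[V_\mu]$ as an isomorphism of genuine modules,
\[
\bigoplus_{a_\pi>0}\mathbb{C}[X_\pi]^{a_\pi}\cong V_\mu\oplus\bigoplus_{a_\pi<0}\mathbb{C}[X_\pi]^{-a_\pi},
\]
which holds by semisimplicity. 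Taking the tensor product with the corresponding isomorphism for $V_\nu$ gives an isomorphism of genuine $S_m\times S_n$-modules. We apply $\ind$ and $\ch$ to both sides, use~\eqref{charPermutation} on each permutation summand, and cancel. Since the restriction to degree $d$ homogeneous parts is linear, the degree-$d$ truncation commutes with all of these steps.

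The main obstacle I anticipate is justifying this last paragraph, namely handling the negative coefficients in the expansion. I plan to deal with it by working with isomorphisms of genuine modules, obtained via semisimplicity, rather than with formal differences.
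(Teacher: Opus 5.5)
Your proposal is correct and is the paper's argument: the corollary is Proposition~\ref{permutationmodules} specialized to $\rho_m=V_\mu$, $\rho_n=V_\nu$ using $\fr(V_\mu)=s_\mu$ and $\fr(V_\nu)=s_\nu$, and the passage from permutation modules to arbitrary representations is already the final step of that proposition's proof.

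There is one small slip in your elaboration of that step. Tensoring your two isomorphisms leaves non-permutation cross terms such as $V_\mu\otimes\mathbb{C}[X_\eta]$, so "cancel" does not yet finish the argument. Either handle one tensor factor at a time, or note that $\ch\circ\ind$ is additive and so defines a homomorphism on the representation ring of $S_m\times S_n$. That ring is $\mathbb{Z}$-spanned by the classes $[\mathbb{C}[X_\pi]\otimes\mathbb{C}[X_\eta]]$, and on these classes $\ch\circ\ind$ agrees with the bilinear right-hand side by \eqref{charPermutation}.
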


Recall from the Introduction that $r_{\lambda\mu\nu}$ denotes the coefficient of $V_{\mu}\times V_{\nu}$ when $W_{\lambda}$ is  restricted to $S_{m}\times S_{n}$. Then Proposition \ref{frobeniusReciprocity} and Proposition \ref{permutationmodules} imply the following super version of Littlewood's formula.
\begin{theorem}[Super-Littlewood identity]\label{Littlewood}
    For $\lambda\in\Hi$, $\mu\vdash m$ and $\nu\vdash n$, we have
    $$r_{\lambda\mu\nu}=\langle s_{\mu}(X)s_{\nu}(Y), \fr(\res(W_{\lambda}))\rangle_{\Lambda(X,Y)} = \langle  s_{\mu}[H]s_{\nu}\{E\}, hs_{\lambda}(X;Y) \rangle_{\Lambda(X|Y)}.$$ 
\end{theorem}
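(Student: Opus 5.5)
The plan is to combine Frobenius reciprocity (Proposition~\ref{frobeniusReciprocity}) with the character formula of Corollary~\ref{inductionFromula}, using semisimplicity on both sides. The first equality is essentially the definition. The group $S_m\times S_n$ is finite, so $\res W_\lambda$ is semisimple. Its Frobenius characteristic in $\Lambda(X,Y)$ is $\sum_{\mu,\nu} r_{\lambda\mu\nu}\, s_\mu(X)s_\nu(Y)$. Since $\{s_\mu(X)s_\nu(Y)\}$ is orthonormal for $\langle\cdot,\cdot\rangle_{\Lambda(X,Y)}$, pairing with $s_\mu(X)s_\nu(Y)$ extracts $r_{\lambda\mu\nu}$. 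Equivalently, $r_{\lambda\mu\nu}=\dim\Hom_{S_m\times S_n}(\res W_\lambda, V_\mu\otimes V_\nu)$ by Schur's lemma.

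For the second equality, I would apply Proposition~\ref{frobeniusReciprocity} with $U=V_\mu\otimes V_\nu$ and $V=W_\lambda$. This gives
\[
r_{\lambda\mu\nu}=\dim\Hom_{S(m,n;d)}\big(W_\lambda,\ind(V_\mu\otimes V_\nu)\big).
\]
Next I would use that $S(m,n;d)$ is semisimple as a superalgebra over $\mathbb{C}$: it is the centralizer of the semisimple algebra $\mathbb{C}[S_d]$ acting on $\T$. Hence $\ind(V_\mu\otimes V_\nu)\cong\bigoplus_{\lambda'\in\Hi} W_{\lambda'}^{\oplus c_{\lambda'}}$.

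The hom space counts $c_\lambda$, but only if two points are checked. First, $\Hom_{S(m,n;d)}(W_\lambda,W_{\lambda'})$ must be one-dimensional exactly when $\lambda=\lambda'$ and zero otherwise. Second, the parity-shifted copies $\Pi W_\lambda$ must not cause trouble. Both follow because the $W_\lambda$ are of type M: they are the Berele--Regev hook modules, and each is absolutely irreducible as an ungraded module. The parity shift does not change the formal character, and the hom space used in Proposition~\ref{frobeniusReciprocity} contains both even and odd morphisms. So $\dim\Hom(W_\lambda,\ind(V_\mu\otimes V_\nu))$ equals the multiplicity of $W_\lambda$ up to parity shift, namely $c_\lambda$. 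I expect this bookkeeping about super-Schur's lemma and parity to be the main delicate point. I would settle it by observing that $S(m,n;d)$ is semisimple as an ungraded algebra, and that its ungraded simples are the $W_\lambda$ with $\dim\mathrm{End}=1$.

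Finally, taking formal characters gives $\ch(\ind(V_\mu\otimes V_\nu))=\sum_{\lambda'} c_{\lambda'}\, hs_{\lambda'}$, since $\ch(W_{\lambda'})=hs_{\lambda'}$. By Corollary~\ref{inductionFromula} the left side is $(s_\mu[H]s_\nu\{E\})_d$. The $hs_{\lambda'}$ with $\lambda'\in\Hi$ are an orthonormal basis of $\Lambda(X|Y)_d$ (Theorem~\ref{hookSchurBasis}), and $hs_\lambda$ has degree $d$, so the other homogeneous components of $s_\mu[H]s_\nu\{E\}$ pair to zero. Hence
\[
c_\lambda=\langle s_\mu[H]s_\nu\{E\},\, hs_\lambda\rangle_{\Lambda(X|Y)},
\]
which completes the chain of equalities.
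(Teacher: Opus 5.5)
Your proposal is correct and follows the same route as the paper. The paper deduces the theorem directly from Frobenius reciprocity (Proposition~\ref{frobeniusReciprocity}) and the character formula for $\ind(V_\mu\otimes V_\nu)$ (Proposition~\ref{permutationmodules}, specialized in Corollary~\ref{inductionFromula}), together with orthonormality of the hook Schur basis. Your additional bookkeeping on semisimplicity of $S(m,n;d)$, the type M property of $W_\lambda$, and parity shifts makes explicit the details the paper leaves unstated.
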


\section{Unimodality of bipartite superpartitions}\label{bipartiteNumbers}
Let $p_{m}(k,l)$ denote the number of vector partitions (also known as bipartite partitions) of length at most $m$ of the vector $(k,l)\in \mathbb{N}^2$. Kim and Hahn \cite{kim1997partitions} proved that if $k\geq l$, then
$$p_{m}(k,l)\geq p_{m}(k+1,l-1)$$
for all $m\in\mathbb{N}$. In \cite{narayanan2021polynomial}, the authors provide an alternative proof of the result using the positivity of the classical restriction coefficients. In this section, following the approach of \cite{narayanan2024hook}, we generalize Kim and Hahn's result to the setting of \textit{bipartite superpartitions} (see Proposition \ref{unimodalitySuperPartitions}).

We abuse notation by using $\eta_{m}$ (resp. $\epsilon_{m}$) to denote both the identity character (resp. sign character) of $S_m$ and the partition $(m)$ (resp. $(1^{m})$). We begin by expressing certain superrestriction coefficients as a signed sum of the cardinalities of sets of matrices defined in Definition \ref{refinedSuperMatrices}.

\begin{proposition}\label{trivialRestriction}
    For any $\lambda\in\Hi$ and $\zeta\in\{\eta_{m},\epsilon_{m}\}$ and $\theta\in\{\eta_{n},\epsilon_{n}\}$,  

    \begin{enumerate}
        \item if $l\leq m$, then 
        $$r_{\lambda\zeta\theta}=\sum_{\tau\in S_{m}}\sign(\tau)\cdot |P^{\zeta}_{\theta}(\lambda+\delta_{m}-\tau\cdot\delta_{m},0;d)|,$$
        \item if $\lambda_{1}\leq n$, then 
        $$r_{\lambda\zeta\theta}=\sum_{\sigma\in S_{n}}\sign(\sigma)\cdot |P^{\zeta}_{\theta}(0,\lambda'+\delta_{n}-\sigma\cdot\delta_{n};d)|,$$
    \end{enumerate}

    where $\delta_{m}=(m-1,m-2,\dotsc,1)$ and $\tau\cdot\delta_{m}=(m-\tau(1),\dotsc,m-\tau(m))$. In the above expression, we adopt the convention that, for any $\alpha\in\mathbb{Z}^{m}$ and $\beta\in\mathbb{Z}^{n}$, $P^{\zeta}_{\theta}(\alpha,\beta;d)=0$ if any of the parts of $\alpha$ or $\beta$ is negative.
\end{proposition}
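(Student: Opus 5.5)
By Frobenius reciprocity (Proposition \ref{frobeniusReciprocity}) and the semisimplicity of $S(m,n;d)\smod$, we have
$$r_{\lambda\zeta\theta}=\dim\Hom_{S_m\times S_n}(\res W_\lambda,\zeta\otimes\theta)=\dim\Hom_{S(m,n;d)}(W_\lambda,\ind(\zeta\otimes\theta)).$$
This is the multiplicity of $W_\lambda$ in $\ind(\zeta\otimes\theta)$. Since characters of $S(m,n;d)$-supermodules are sums of hook Schur polynomials, the multiplicity equals the coefficient of $hs_\lambda$ when $\ch(\ind(\zeta\otimes\theta))$ is expanded in the basis $\{hs_\lambda\}$ (Theorem \ref{hookSchurBasis}). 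By Proposition \ref{trivial}, this character is the generating function $F=\sum|P^\zeta_\theta(\alpha,\beta;d)|x^\alpha y^\beta$. The task is therefore to extract the $hs_\lambda$-coefficient of the supersymmetric polynomial $F$ using only the monomials with $\beta=0$ (in case (1)) or with $\alpha=0$ (in case (2)).

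For case (1), I will specialize $y_1=\dots=y_n=0$. Under this specialization $hs_\mu(X,0)=s_\mu(X)$ when $l(\mu)\le m$, because a supertableau with no barred letters is an ordinary SSYT in $[m]$. If $l(\mu)>m$ the specialization gives $0$, since then some column would need a repeated unbarred letter. Writing $F=\sum_\mu c_\mu hs_\mu$, we get $F(X,0)=\sum_{l(\mu)\le m}c_\mu s_\mu(X)$. The Schur polynomials in $m$ variables with $l(\mu)\le m$ are linearly independent, so for $l(\lambda)\le m$ the coefficient $c_\lambda$ equals the coefficient of $s_\lambda(X)$ in $F(X,0)=\sum_\alpha|P^\zeta_\theta(\alpha,0;d)|x^\alpha$. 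That coefficient I will extract with the standard antisymmetrization identity. Multiplying by the Vandermonde $a_{\delta}=\sum_\tau \sign(\tau)x^{\tau\cdot\delta_m}$ turns $s_\lambda$ into $a_{\lambda+\delta}$. Hence $c_\lambda$ is the coefficient of $x^{\lambda+\delta_m}$ in $a_\delta F(X,0)$, which equals
$$\sum_\tau\sign(\tau)\,|P^\zeta_\theta(\lambda+\delta_m-\tau\cdot\delta_m,0;d)|,$$
with the convention that the term vanishes when some part is negative. Here I take $\delta_m$ to be $(m-1,\dots,0)$ and match the paper's indexing convention for $\tau\cdot\delta_m$. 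One subtlety is that the coefficient extraction from $a_\delta f$ requires $f$ to be symmetric. This holds here: $F(X,0)$ is symmetric in $X$ because $F\in\Lambda(X|Y)$, or alternatively because the row-permutation symmetry of the sets $P$ can be checked directly.

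Case (2) is symmetric. I will set $x_1=\dots=x_m=0$, so that $hs_\mu(0,Y)=s_{\mu'}(Y)$ when $\mu_1\le n$, and $hs_\mu(0,Y)=0$ otherwise (rows of barred letters must be strictly increasing). The conjugate $\lambda'$ then has length at most $n$, and the same Vandermonde argument in $y_1,\dots,y_n$ gives the second formula.

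The main obstacle is not the algebra but justifying the reduction to characters. I need complete reducibility of $\ind(\zeta\otimes\theta)$, or at least that its character determines its composition multiplicities, together with the fact that Hom-dimension equals the $hs_\lambda$-coefficient. Over $\mathbb{C}$ the Schur superalgebra is semisimple (Berele–Regev), and I would cite this. One must also be careful that odd homomorphisms between simples are handled consistently (type M simples), so that the Hom-dimension counts exactly the multiplicity. The vanishing-specialization claims for $hs_\mu$ follow directly from Proposition \ref{tableauxformula}.
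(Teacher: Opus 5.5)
Your proposal is correct and follows essentially the same route as the paper. Both identify $\ch(\ind(\zeta\otimes\theta))=\sum_{\pi}r_{\pi\zeta\theta}\,hs_{\pi}$ (the paper cites Theorem~\ref{Littlewood}, while you rederive it from Frobenius reciprocity, semisimplicity and type~M) and then extract the $hs_\lambda$-coefficient with a Vandermonde. The only difference is cosmetic: the paper multiplies by $a_{\delta_m}(X)a_{\delta_n}(Y)$, expands $hs_\pi$ via Littlewood--Richardson coefficients and then sets $\nu=\emptyset$ (resp.\ $\mu=\emptyset$) using $c^{\pi}_{\lambda\emptyset}=\delta_{\pi\lambda}$, whereas you specialize $Y=0$ (resp.\ $X=0$) first via the tableau formula, which amounts to the same computation.
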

\begin{proof}
Recall from Proposition \ref{trivial} that $$\ch(\ind \zeta\otimes \theta)=\sum_{\substack{(\alpha,\beta)\in\mathbb{N}^{m}\times\mathbb{N}^{n}\\|\alpha|+|\beta|=d}} |P^{\zeta}_{\theta}(\alpha,\beta;d)|x^{\alpha}y^{\beta}.$$ Then from Theorem \ref{Littlewood}, we have
\begin{align*}
\sum_{\substack{(\alpha,\beta)\in\mathbb{N}^{m}\times\mathbb{N}^{n}\\|\alpha|+|\beta|=d}} |P^{\zeta}_{\theta}(\alpha,\beta;d)|x^{\alpha}y^{\beta}&=\sum_{\pi\in \Hi}r_{\pi\zeta\theta}hs_{\pi}(X,Y)\\
&=\sum_{\pi\in\Hi}r_{\pi\zeta\theta}(\sum_{\mu,\nu}c^{\pi}_{\mu,\nu'}s_{\mu}(X)s_{\nu}(Y))\\
&=\sum_{\mu,\nu}(\sum_{\pi\in\Hi}r_{\pi\zeta\theta}c^{\pi}_{\mu\nu'})s_{\mu}(X)s_{\nu}(Y)\\
\end{align*}
Using the Cauchy bialternant formula for Schur polynomials \cite[Equation 3.1]{macdonald1998symmetric}, when $l(\mu)\leq m$ and $l(\nu)\leq n$, we obtain
\begin{align*}
    a_{\delta_{m}}(X)a_{\delta_{n}}(Y)\sum_{\substack{(\alpha,\beta)\in\mathbb{N}^{m}\times\mathbb{N}^{n}\\|\alpha|+|\beta|=d}} |P^{\zeta}_{\theta}(\alpha,\beta;d)|x^{\alpha}y^{\beta}&=\sum_{\mu,\nu}(\sum_{\pi\in\Hi}r_{\pi\zeta\theta}c^{\pi}_{\mu\nu'})a_{\mu+\delta_{m}}(X)a_{\nu+\delta_{n}}(Y)
\end{align*}
 where $a_{\mu+\delta_{m}}(X)=\det (x^{\mu_{j}+m-j})$ and $a_{\delta_{m}}(X)= \det (x^{m-j})=\sum_{\tau\in S_{m}}\sign(\tau)x^{\tau\cdot\delta_{m}}$. Using the definitions of $a_{\delta_{m}}(X)$ and $a_{\delta_{n}}(Y)$, we have

\begin{align*}
    &\sum_{(\tau,\sigma)\in S_{m}\times S_{n}}\sign (\tau)\sign (\sigma)\sum_{\substack{(\alpha,\beta)\in\mathbb{N}^{m}\times\mathbb{N}^{n}\\|\alpha|+|\beta|=d}} |P^{\zeta}_{\theta}(\alpha,\beta;d)|x^{\alpha+\tau\cdot\delta_{m}}y^{\beta+\sigma\cdot\delta_{n}}\\=&\sum_{\mu,\nu}(\sum_{\pi\in\Hi}r_{\pi\zeta\theta}c^{\pi}_{\mu\nu'})a_{\mu+\delta_{m}}(X)a_{\nu+\delta_{n}}(Y).
\end{align*}

Now, comparing the coefficients of $x^{\mu+\delta_{m}}y^{\nu+\delta_{n}}$ on both sides,
we obtain
\begin{align}\label{eq}
    \sum_{\pi\in\Hi}r_{\pi\zeta\theta}c_{\mu\nu'}^{\pi}=\sum_{(\tau,\sigma)\in S_{m}\times S_{n}}\sign(\tau)\sign(\sigma)\cdot |P^{\zeta}_{\theta}(\mu+\delta_{m}-\tau\cdot\delta_{m},\nu+\delta_{n}-\sigma\cdot\delta_{n};d)|
\end{align}
for every $\mu$ and $\nu$. Denote the right hand side of \eqref{eq} by $\mathcal{P}(\mu,\nu;d)$ for simplicity.

When $l(\lambda)\leq m$, we take $\mu=\lambda$ and $\nu=\emptyset$ in \eqref{eq} to obtain
$$\mathcal{P}(\lambda,\emptyset;d)=\sum_{\pi\in\Hi}c^{\pi}_{\lambda,\emptyset}r_{\pi\zeta\theta}=c^{\lambda}_{\lambda,\emptyset}r_{\lambda\zeta\theta}=r_{\lambda\zeta\theta}$$
since $c^{\lambda}_{\lambda\emptyset}=1$ and $c^{\pi}_{\lambda\emptyset}=0$ unless $\lambda\subset \pi$.

Similarly, when $\tilde\lambda \in\Hi$ satisfies $\tilde\lambda_{1}\leq n$, we take $\mu=\emptyset$ and $\nu={\tilde{\lambda}}^{'}$ in \eqref{eq} to obtain
$$\mathcal{P}(\emptyset,\tilde\lambda^{'} ;d)=\sum_{\pi\in\Hi}c^{\pi}_{\emptyset,\tilde\lambda}r_{\pi\zeta\theta}=c^{\tilde\lambda}_{\emptyset,\tilde\lambda}r_{\tilde\lambda\zeta\theta}=r_{\tilde\lambda\zeta\theta}.$$ This completes the proof.
\end{proof}

\begin{definition}[Bipartite superpartitions]\label{bipartiteSuperPartitions}
    Let $\alpha\in\mathbb{N}^{2}$, $m,n\in\mathbb{N}$.
    \begin{enumerate}
        
    \item Let $B^{\eta_m}_{\eta_n}(\alpha)$ (resp. $B^{\eta_m}_{\epsilon_n}(\alpha)$) denote the set of tuples $(v_{1},\dotsc,v_{m},v_{1}',\dotsc, v_{n}')$ with each $v_{i}\in\mathbb{N}^{2}$ and $v_{j}'\in\mathbb{B}^{2}$ such that
    \begin{enumerate}
        \item $(v_i)_{i=1}^{m}$ and $(v'_j)_{i=1}^{n}$ are weakly increasing in lexicographic order.
        \item any element from the set $\{(1,0),(0,1)\}$ (resp. $\{(1,1), (0,0)\}$) can appear at most once in $\{v_{1}'\dotsc,v_{n}'\}$,
        \item $\sum_{i=1}^{m}v_{i}+\sum_{j=1}^{n}v_{j}'=\alpha$.
    \end{enumerate}
    \item Let $B^{\epsilon_m}_{\eta_n}(\alpha)$ (resp. $B^{\epsilon_m}_{\epsilon_n}(\alpha)$) denote the set of tuples $(v_{1},\dotsc,v_{m},v_{1}',\dotsc, v_{n}')$ with each $v_{i}\in\mathbb{N}^{2}$ and $v_{j}'\in\mathbb{B}^{2}$ such that
    \begin{enumerate}
        \item $(v_i)_{i=1}^{m}$ and $(v'_j)_{i=1}^{n}$ are weakly increasing in lexicographic order.
        \item $v_{i}\neq v_{j}$ for all $1\leq i<j\leq m$.
        \item any element from the set $\{(1,0),(0,1)\}$ (resp. $\{(1,1), (0,0)\}$) can appear at most once in $\{v_{1}'\dotsc,v_{n}'\}$,
        \item $\sum_{i=1}^{m}v_{i}+\sum_{j=1}^{n}v_{j}'=\alpha$.
    \end{enumerate}
    \end{enumerate}
\end{definition}

\begin{proposition}[Unimodality of bipartite superpartitions]\label{unimodalitySuperPartitions}Assume that $m,n\in\mathbb{N}$ and $\lambda=(\lambda_{1},\lambda_{2})$ be any partition with exactly two parts. Then for $\zeta\in\{\eta_m ,\epsilon_m\}$ and $\theta\in\{\eta_n ,\epsilon_n\}$, we have
\begin{align}\label{unimodality}
    |B^{\zeta}_{\theta}(\lambda_{1},\lambda_{2})|\geq |B^{\zeta}_{\theta}(\lambda_{1}+1,\lambda_{2}-1)|
\end{align}   
\end{proposition}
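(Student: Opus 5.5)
The plan is to read the inequality \eqref{unimodality} as the nonnegativity of one superrestriction coefficient. We specialize Proposition \ref{trivialRestriction}(1) to the case where the ambient $m$ of the Schur superalgebra is $2$. To avoid a clash of notation, we rename the parameters in the statement: the group is $S_{m'}\times S_{n'}$, and the superalgebra is $S(2,n';d)$ with $d=m'+n'$. More precisely, we use the Schur superalgebra $S(2,n'';d)$ whose row structure is $m'$ ordinary rows and $n'$ super rows, with two columns of ordinary type and no super columns.

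Concretely, for the restriction from $S(m',n';d)$ to $S_{m'}\times S_{n'}$ the matrices have size $(m'+n')\times(m'+n')$. Suppose we restrict attention to column-sum vectors $(\alpha,\beta)$ with $\alpha$ supported on the first two coordinates and $\beta=0$. Then all columns other than the first two vanish, so a matrix in $P^{\zeta}_{\theta}(\alpha,0;d)$ amounts to the following data:
\begin{enumerate}
\item $m'$ rows $v_i\in\mathbb N^2$, weakly increasing;
\item $n'$ rows $v'_j\in\mathbb B^2$, weakly increasing (this is the super part);
\item the parity condition on repeated rows.
\end{enumerate}
The parity condition goes as follows. For an ordinary row, $n(v_i)=0$, so under $\eta$ there is no constraint, while under $\epsilon$ repeated rows are forbidden. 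For a super row, $n(v'_j)$ is the number of $1$'s in it. Under $\eta_n$, rows with an odd number of $1$'s, namely $(1,0)$ and $(0,1)$, may not repeat. Under $\epsilon_n$, rows with an even number of $1$'s, namely $(0,0)$ and $(1,1)$, may not repeat. This matches Definition \ref{bipartiteSuperPartitions} exactly. The first step is therefore to check the identification $|P^{\zeta}_{\theta}((a,b,0,\dots,0),0;d)|=|B^{\zeta}_{\theta}(a,b)|$. Equivalently, we take the coefficient of $x_1^ax_2^b$ in $\ch(\ind(\zeta\otimes\theta))$ for $S(2,0;\cdot)$-type characters. The cleanest route is to use Proposition \ref{trivial} with the superalgebra $S(2,0;d)$-side variables, i.e.\ to specialize $x_3=\dots=y_n=0$ in the character. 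That specialization is legitimate since the character is a polynomial, and it kills every monomial involving other variables.

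Next, the specialized character $\ch(\ind(\zeta\otimes\theta))|_{x_1,x_2}$ is a symmetric polynomial in $x_1,x_2$. It equals $\sum_\pi r_{\pi\zeta\theta}\,hs_\pi(x_1,x_2;0)=\sum_{l(\pi)\le 2} r_{\pi\zeta\theta}s_\pi(x_1,x_2)$, because $hs_\pi$ with $y=0$ becomes $s_\pi(X)$. Its coefficients are nonnegative because the $r$'s are multiplicities. Taking $\lambda=(\lambda_1,\lambda_2)$ and applying the $m=2$ case of the alternant argument in Proposition \ref{trivialRestriction}(1), with $\delta_2=(1,0)$ and $S_2=\{\mathrm{id},(12)\}$, gives
$$r_{\lambda\zeta\theta}=|B^{\zeta}_{\theta}(\lambda_1,\lambda_2)|-|B^{\zeta}_{\theta}(\lambda_1+1,\lambda_2-1)|.$$
Here the $(12)$ term has exponent $\lambda+(1,0)-(0,1)=(\lambda_1+1,\lambda_2-1)$. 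Nonnegativity of $r_{\lambda\zeta\theta}$ then yields \eqref{unimodality}. If $\lambda_1=\lambda_2$ fails to give a genuine partition, nothing special happens: the formula still holds for $\lambda_1\ge\lambda_2$.

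The main obstacle is bookkeeping rather than ideas. The paper's $m,n$ in Proposition \ref{unimodalitySuperPartitions} are the group sizes, whereas the two columns carry the variables $x_1,x_2$. One has to be careful that the correct superalgebra is the one with $m'$ even rows, $n'$ odd rows, and only two nonzero (even) columns. We also need to confirm that the block/parity conventions of Definition \ref{refinedSuperMatrices} restrict correctly, specifically that super entries in the odd rows but even columns are $\mathbb B$-valued. The safest check is to verify directly, via Lemma \ref{superIdentity}, that the generating function $(h_{m'}[H]h_{n'}\{E\})$ or its $e$-variants, at $y=0$ and with two $x$-variables, counts exactly the tuples in $B^{\zeta}_{\theta}$.
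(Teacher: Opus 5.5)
Your argument is correct for $m\ge 2$ (your $m'$) and is essentially the paper's. You specialize the character to $x_1,x_2$ with $y=0$ and use the $S_2$ alternant. The paper instead applies Proposition~\ref{trivialRestriction}(1) to $(\lambda_1,\lambda_2,0^{m-2})$ and notes that only permutations fixing $3,\dots,m$ contribute. Both routes give $r_{\lambda\zeta\theta}=|B^{\zeta}_{\theta}(\lambda_1,\lambda_2)|-|B^{\zeta}_{\theta}(\lambda_1+1,\lambda_2-1)|$. Note that $d=\lambda_1+\lambda_2$, not $m'+n'$.

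\textbf{The gap is the cases $m\in\{0,1\}$.} The statement includes them, since $\mathbb{N}$ contains $0$ in this paper, and the paper treats them separately.

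\emph{Why your argument does not reach them.} Your positivity comes from $r_{\pi\zeta\theta}$ being a multiplicity in $\res W_\pi$. The superalgebra containing $S_m\times S_n$ is $S(m,n;d)$, whose matrices are square: the number of even rows equals the number of variables $x_1,\dots,x_m$. For $m\le 1$ the character $\ch(\ind(\zeta\otimes\theta))$ has no variable $x_2$. So none of its coefficients counts $B^{\zeta}_{\theta}(a,b)$, and there is nothing to specialize. The ``$S(2,n'';d)$ with $m'$ ordinary rows and two ordinary columns'' that you use to decouple rows from columns is not a Schur superalgebra and carries no restriction coefficients; $S(2,n;d)$ contains only $S_2\times S_n$. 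Lemma~\ref{superIdentity} does decouple the number of rows from the number of variables, but it is pure counting and gives no positivity.

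\emph{How the paper closes these cases.} It argues combinatorially:
\begin{itemize}
\item For $m=0$, $\theta=\eta_n$, the right side is empty, since $(1,0)$ and $(0,1)$ each occur at most once.
\item For $m=0$, $\theta=\epsilon_n$, it injects by changing the leftmost $(1,0)$ to $(0,1)$.
\item For $m=1$, $\theta=\eta_n$, it uses the injection $v\mapsto v+(-1,1)$.
\item For $m=1$, $\theta=\epsilon_n$, it uses a case-by-case surjection from $B^{\zeta}_{\epsilon_n}(\lambda_1,\lambda_2)$ onto $B^{\zeta}_{\epsilon_n}(\lambda_1+1,\lambda_2-1)$.
\end{itemize}

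\emph{An alternative repair that keeps your approach.} You can get positivity without representation theory.
\begin{itemize}
\item As in Remark~\ref{superization}, $f[H]$ and $f\{E\}$ are the images of the classical $f[\tilde H]$ under the ring maps $p_k\mapsto hp_k$ and $p_k\mapsto(-1)^{k-1}p_k(X)+p_k(Y)$. These maps send $s_\pi$ to $hs_\pi$ and to $hs_{\pi'}$ respectively.
\item Since $h_k[\tilde H]$ and $e_k[\tilde H]$ are Schur positive, $h_m[H]\,h_n\{E\}$ and its $e$-variants are nonnegative combinations of hook Schur polynomials in any number of variables.
\item You may then set $X=(x_1,x_2)$ and $Y=\emptyset$ for every $m,n$, and finish with your alternant argument.
\end{itemize}
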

\begin{proof}
Assume that $m\geq 2, n\in\mathbb{N}$. Let $\lambda=(\lambda_{1},\lambda_{2})\vdash d$. Pad $\lambda$ with $m-2$ zero's to obtain $\lambda=(\lambda_{1},\lambda_{2},0^{m-2})$. From Corollary \ref{trivialRestriction}.(1), we have
\begin{multline}\label{restrictioncoeff}r_{\lambda\zeta\theta}=\sum_{\tau\in S_{m}}\sign(\tau)\cdot|P^{\zeta}_{\theta}((\lambda_{1}+\tau(1)-1,\lambda_{2}+\tau(2)-2,\tau(3)-3,\dotsc,\tau(m)-m),0;d)|.
\end{multline}

If $\tau(i)-i\geq 0$ for all $i>2$, then we must have $\tau(i)-i= 0$ for all $i>2$. Hence \eqref{restrictioncoeff} reduces to
\begin{align*}
    r_{\lambda\zeta\theta}=&|P^{\zeta}_{\theta}((\lambda_{1},\lambda_{2}),0;d)|-|P^{\zeta}_{\theta}((\lambda_{1}+1,\lambda_{2}-1),0;d)|\\
    =&|B^{\zeta}_{\theta}(\lambda_{1},\lambda_{2})|-|B^{\zeta}_{\theta}(\lambda_{1}+1,\lambda_{2}-1)|.
\end{align*}
Since $r_{\lambda\zeta\theta}\geq 0$, we have the required identity in this case.

Assume that $m=0$, $n\in\mathbb{N}$ and $\theta=\epsilon_n$. For $v\in\mathbb{N}^{2}$ and $i\in[2]$, let $(v)_{i}$ denote the $i$th coordinate of $v$. We define an injective map from $B^{\zeta}_{\epsilon_{n}}(\lambda_{1}+1,\lambda_{2}-1)$ to $B^{\zeta}_{\epsilon_{n}}(\lambda_{1},\lambda_{2})$ as follows: define $$(v''_{1},\dotsc, v''_{n})\mapsto (v'_{1},\dotsc, v'_{n}),$$
where $(v'_{1},\dotsc, v'_{n})$ is obtained from $(v''_{1},\dotsc, v''_{n})$ by replacing the leftmost $(1,0)$ in\\ $(v''_{1},\dotsc, v''_{n})$ (which exists by hypothesis) with $(0,1)$. To show the well-definedness of the map, we have to show that for any $(v''_{1},\dotsc, v''_{n})\in B^{\zeta}_{\epsilon_{n}}(\lambda_{1}+1,\lambda_{2}-1)$, there exists $i\in[n]$ such that $v''_{i}=(1,0)$. If possible, assume that $v''_i\neq (1,0)$ for all $i\in[n]$. Then the multiset $\{\{v''_{1},\cdots,v''_{n}\}\}$ would only consists of $(0,1)$'s, at most one $(1,1)$ and at most one $(0,0)$. Then we would have $$\lambda_{2}-1= \sum_{i=1}^{n}(v''_{i})_2 \geq \sum_{i=1}^{n}(v''_{i})_1 = \lambda_{1}+1,$$ a contradiction.  This proves \eqref{unimodality} in this case.

If $m=0$, $n\in\mathbb{N}$ and $\theta=\eta_n$, we have $$|B^{\zeta}_{\eta_{n}}(\lambda_{1}+1,\lambda_{2}-1)|=0$$ for $\zeta\in\{\eta_{m},\epsilon_{m}\}$.  Hence \eqref{unimodality} holds in this case.

Assume that $m=1, n\in\mathbb{N}$ and $\theta=\eta_n$. We construct an injective map from $B^{\zeta}_{\eta_{n}}(\lambda_{1}+1,\lambda_{2}-1)$ to $B^{\zeta}_{\eta_{n}}(\lambda_{1},\lambda_{2})$ as follows: define 
$$(v,v''_{1},\dotsc,v''_{n})\mapsto (w,v''_{1},\dotsc,v''_{n}),$$
where $w=v+(-1,1)$. To show the well-definedness of the map, we have to show that $(v)_{1}>0$ if $(v,v''_{1},\dotsc,v''_{n})\in B^{\zeta}_{\eta_{n}}(\lambda_{1}+1,\lambda_{2}-1)$. If possible, assume that $(v)_{1}=0$. Since the multiset $\{\{v''_{1},\cdots,v''_{n}\}\}$ consists of at most one $(1,0)$, we would have $$1\geq \bigg((v)_{1}+\sum_{i=1}^{n}(v''_{i})_1\bigg)-\bigg((v)_{2} + \sum_{i=1}^{n}(v''_{i})_2\bigg) = (\lambda_{1}+1)-(\lambda_{2}-1)=\lambda_{1}-\lambda_{2}+2\geq 2,$$ a contradiction.
Hence \eqref{unimodality} also holds in this case.

Now, assume that $m=1, n\in\mathbb{N}$, $\theta=\epsilon_n$. Let $(v,v'_{1},\dotsc v'_{n})\in B^{\zeta}_{\epsilon_{n}}(\lambda_{1},\lambda_{2})$.
Define $$b_{0}:B^{\zeta}_{\epsilon_{n}}(\lambda_{1},\lambda_{2})\xrightarrow[]{}\mathbb{B}$$ by
\begin{equation*}
b_{0}(v,v'_{1},\dotsc, v'_{n})=
\begin{cases}
    1 & \text{if }v'_{i}=(0,0)\text{ for some }i\in[n]\\
    0 & \text{otherwise.}
\end{cases}
\end{equation*}
We construct a surjective map $$f: B^{\zeta}_{\epsilon_{n}}(\lambda_{1},\lambda_{2})\xrightarrow{} B^{\zeta}_{\epsilon_{n}}(\lambda_{1}+1,\lambda_{2}-1)$$
by
\begin{equation*}
    f(v,v'_{1},\dotsc,v'_{n})=
    \begin{cases}
        (w,v'_{1},\dotsc,v'_{n}) & \text{if }n- b_{0}(v,v'_{1},\dotsc v'_{n})< \lambda_{2}\\
        (w,v'_{1},\dotsc,v'_{n}) & \text{if }\lambda_{2}\leq n- b_{0}(v,v'_{1},\dotsc v'_{n})\leq \lambda_{1}\text{ and }(v)_{2}>0\\
        (v,v''_{1},\dotsc,v''_{n}) & \text{if }\lambda_{2}\leq n- b_{0}(v,v'_{1},\dotsc v'_{n})\leq \lambda_{1}\text{ and }(v)_{2}=0\\
        (v,v''_{1},\dotsc,v''_{n}) & \text{if } n- b_{0}(v,v'_{1},\dotsc v'_{n})> \lambda_{1},
    \end{cases}
\end{equation*}
where $w=v+(1,-1)$ and $(v''_{1},\dotsc v''_{n})$ is obtained from $(v'_{1},\dotsc v'_{n})$ by replacing the rightmost $(0,1)$ with $(1,0)$. 

\textbf{$\phi$ is well-defined:} Let $(v,v'_{1},\dotsc, v'_{n})\in B^{\zeta}_{\epsilon_{n}}(\lambda_{1},\lambda_{2})$. If $n- b_{0}(v,v'_{1},\dotsc, v'_{n})< \lambda_{2}$, we need to show that $(v)_{2}>0$. If $(v)_{2}=0$, then the multiset $\{\{(v'_{1})_{2},\dotsc, (v'_{n})_{2}\}\}$ of second coordinates would contain $\lambda_{2}$ many $1$'s, which would imply that $n- b_{0}(v,v'_{1},\dotsc v'_{n})\geq \lambda_{2}$, contradicting our hypothesis. The well-definedness of $f$ is clear when $\lambda_{1}\leq n- b_{0}(v,v'_{1},\dotsc, v'_{n})\leq \lambda_{2}$.
If $n- b_{0}(v,v'_{1},\dotsc, v'_{n})>\lambda_{1}$ and $v'_{i}\neq (0,1)$ for all $i\in[n]$, then the following multiset $\{\{(v'_{1})_{1},\dotsc,(v'_{n})_{1}\}\}-\{\{0\}\}$ of first coordinates would only consist of $1$'s, which would imply that $n-b_{0}(v,v'_{1},\dotsc, v'_{n})\leq \lambda_{1}$, contradicting our hypothesis.

\textbf{$\phi$ is surjective:} Let $(w,v''_{1},\dotsc, v''_{n})\in B^{\zeta}_{\epsilon_{n}}(\lambda_{1}+1,\lambda_{2}-~1)$. 

\textbf{Case 1:} If $\lambda_{1}\geq n-f(w,v''_{1},\dotsc, v''_{n})$, then we must have $(w)_{1}>0$. If not, then the multiset $\{\{(v''_{1})_{1},\dotsc, (v''_{n})_{1}\}\}$ of first coordinates would contain $\lambda_{1}+1$ many $1$'s, which would imply that 
$$n-f(w,v''_{1},\dotsc, v''_{n})\geq \lambda_{1}+1>\lambda_{1},$$ contradicting our hypothesis. Let $v=w+(-1,1)$. Then $$f(v,v''_{1},\dotsc, v''_{n})=(w,v''_{1},\dotsc, v''_{n}).$$ 

\textbf{Case 2:} If $\lambda_{1}< n-b_{0}(f(w,v''_{1},\dotsc, v''_{n}))$, then there must exist at least one $i\in[n]$ such that $v''_{i}=(1,0)$. If not, then the multiset $\{\{(v''_{1})_{2},\dotsc, (v''_{n})_{2}\}\}-\{\{0\}\}$ of second coordinates would only consist of $1$'s, which would imply that
$$n-b_{0}(f(w,v''_{1},\dotsc, v''_{n}))\leq \lambda_{2}-1<\lambda_{1}.$$ Let $(v'_{1},\dotsc v'_{n})$ be the vector obtained from $(v''_{1},\dotsc, v''_{n})$ by replacing the leftmost $(1,0)$ with $(0,1)$. Then $f(w, v'_{1},\dotsc, v'_{n})=(w,v''_{1},\dotsc, v''_{n})$. Hence $f$ is surjective.

This finishes the proof of the proposition.
\end{proof}

\section{Relation to classical restriction coefficients}
Let $\{\tilde{W}^{m}_{\lambda}\mid l(\lambda)\leq m \}$ denote the set of simple modules for the Schur algebra $S(m;d)$.
The $0$-graded part of $S(m,n;d)$ is isomorphic to the product $S(m;d)\times S(n;d)$ of classical Schur algebras. 
The restriction of $W_{\lambda}$ to $S(m;d)\times S(n;d)$ is given by \cite[Theorem~6.11]{berele1987hook}:
\begin{align}\label{eq1}
    \text{Res}^{S(m,n;d)}_{S(m;d)\times S(n;d)}W_{\lambda}=\bigoplus_{\pi,\eta}(\tilde{W}^{m}_{\pi}\otimes \tilde{W}^{n}_{\eta})^{\oplus c^{\lambda}_{\pi,\eta'}}\hbox{ for }\lambda\in\mathcal{H}(m,n;d).
\end{align}

The classical restriction coefficients $r_{\lambda\mu}\in\mathbb{N}$ are defined using the identity
\begin{align*}
    \text{Res}^{S(m;d)}_{S_m}\tilde{W}^{m}_{\lambda}=\bigoplus_{\mu\vdash m}V_{\mu}^{\oplus r_{\lambda\mu}}.
\end{align*} 
For $n=0$, Corollary \ref{Littlewood} reduces to the classical Littlewood idenity $$r_{\lambda\mu}=\langle s_{\lambda}, s_{\mu}[\tilde{H}]_{d}\rangle_{\Lambda(X)}$$ where $\tilde{H}:=\tilde{H}(1)=\sum_{i\geq 0}h_{i}=\prod_{i=1}^{n}(1-x_{i})^{-1}$. Then we have 
\begin{align}\label{eq2}
    \text{Res}^{S(m;d)\times S(n;d)}_{S_{m}\times S_{n}} \Tilde{W}^{m}_{\pi}\otimes \Tilde{W}^{n}_{\eta}=\bigoplus_{\substack{\mu\vdash m,\\ \nu\vdash n}} (V_{\mu}\otimes V_{\nu})^{\oplus r_{\pi\mu}r_{\eta\nu}}. 
\end{align}
From \eqref{eq1} and \eqref{eq2}, we have
\begin{align*}
    \res W_{\lambda}=\bigoplus_{\substack{\mu\vdash m\\\text{ }\nu\vdash n}}\bigoplus_{\pi,\eta} (V_{\mu}\otimes V_{\nu})^{\oplus c^{\lambda}_{\pi\eta'}r_{\pi\mu}r_{\eta\nu}}
\end{align*}
for $\lambda\in\mathcal{H}(m,n;d)$. Using Theorem \ref{Littlewood}, we have the following equality.
\begin{proposition}\label{plethysticIdentity}For $\lambda\in\Hi,\mu\vdash m$ and $\nu\vdash n$, we have
    $$r_{\lambda\mu\nu}=\langle hs_{\lambda},s_{\mu}[H]s_{\nu}\{E\}\rangle_{\Lambda(X|Y)}
    =\sum_{\pi,\eta}c^{\lambda}_{\pi\eta'}{\langle s_{\pi}, s_{\mu}[\tilde{H}]\rangle_{\Lambda(X)}}{\langle s_{\eta}, s_{\nu}[\tilde{H}]}\rangle_{\Lambda(X)},$$ where $\langle\cdot,\cdot\rangle_{\Lambda(X)}$ is the Hall inner product on $\Lambda(X)$.
\end{proposition}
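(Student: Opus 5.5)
The first equality is Theorem~\ref{Littlewood}, quoted verbatim, so the work is in the second equality. I will obtain it by restricting $W_\lambda$ to $S_m\times S_n$ in two stages through the even subalgebra. Since $S_m\times S_n\subset S(m;d)\times S(n;d)\subset S(m,n;d)$, restriction is transitive:
$$\res W_\lambda=\Res^{S(m;d)\times S(n;d)}_{S_m\times S_n}\Res^{S(m,n;d)}_{S(m;d)\times S(n;d)}W_\lambda .$$
The plan is to compute each stage separately and then multiply the multiplicities.

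For the inner stage I will use \eqref{eq1}, which gives $\bigoplus_{\pi,\eta}(\tilde W^m_\pi\otimes\tilde W^n_\eta)^{\oplus c^\lambda_{\pi\eta'}}$. For the outer stage I will use \eqref{eq2}. Each factor $\tilde W^m_\pi$ restricted to $S_m$ decomposes with multiplicities $r_{\pi\mu}$, and for an external tensor product of representations of $S_m$ and $S_n$ these multiplicities simply multiply. Substituting gives the displayed decomposition of $\res W_\lambda$, so
$$r_{\lambda\mu\nu}=\sum_{\pi,\eta}c^\lambda_{\pi\eta'}\,r_{\pi\mu}\,r_{\eta\nu},$$
using that the $V_\mu\otimes V_\nu$ are pairwise non-isomorphic irreducibles of $S_m\times S_n$.

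It remains to replace the classical coefficients by Hall inner products. By Littlewood's formula \eqref{LittlwoodFormula}, which is also the $n=0$ case of Theorem~\ref{Littlewood} as remarked before the proposition, we have $r_{\pi\mu}=\langle s_\pi,s_\mu[\tilde H]\rangle_{\Lambda(X)}$. Similarly $r_{\eta\nu}=\langle s_\eta,s_\nu[\tilde H]\rangle$, computed in $\Lambda$ of the $n$ variables $Y$ but written with the same notation. Substituting these into the sum yields the claim.

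The main point needing care is the claim that restricting a supermodule to the even subalgebra, and then to the group, yields an ordinary representation whose multiplicities may be read off, with parity ignored. Since $S_m\times S_n$ lies in the even part, its action preserves the $\mathbb Z_2$-grading and no signs interfere. The other point is that \eqref{eq1} is a genuine direct sum decomposition rather than just a character identity. Should that be in doubt, I would argue through characters instead: $\ch W_\lambda=hs_\lambda=\sum c^\lambda_{\pi\eta'}s_\pi(X)s_\eta(Y)$ by Definition~\ref{hookSchurdef}, and $S(m;d)\times S(n;d)$ is semisimple in characteristic zero. Hence the restriction is determined by its character, which recovers \eqref{eq1}.
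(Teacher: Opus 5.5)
Your proposal is correct and follows the paper's own argument: you restrict $W_\lambda$ in two stages using \eqref{eq1} and \eqref{eq2}, multiply multiplicities to get $r_{\lambda\mu\nu}=\sum_{\pi,\eta}c^{\lambda}_{\pi\eta'}r_{\pi\mu}r_{\eta\nu}$, rewrite each classical coefficient by Littlewood's formula, and take the first equality from Theorem~\ref{Littlewood}. One harmless precision, inherited from the paper's wording: the intermediate algebra is really the block-diagonal subalgebra $\bigoplus_{d_1+d_2=d}S(m;d_1)\otimes S(n;d_2)$ (since $|\pi|+|\eta|=d$), not the full even part of $S(m,n;d)$; it is still semisimple and contains both $S_m\times S_n$ and all the $\xi_\mu$, so your argument, including the character-theoretic fallback, goes through unchanged.
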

For a partition $\nu=(\nu_{1},\nu_{2},\dotsc,\nu_{l})$ and $n\geq |\nu|+\nu_{1}$, define
$$\nu[n]=(n-|\nu|,\nu_{1},\nu_{2},\dotsc,\nu_{l}).$$ Let $\mu$ and $\nu$ be any two partitions, and let $\lambda\vdash d$. Define $r_{\lambda\mu\nu}(m,n)=r_{\lambda\mu[m]\nu[n]}$ for large enough $m$ and $n$ (requiring $\lambda\in\Hi$). It is well-known that the sequence $(r_{\lambda\nu[n]})_{n\geq 0}$ eventually stabilizes for large enough $n$ (see e.g., \cite{littlewood1977theory,assaf2020specht,narayanan2021character}). This result along with Proposition \ref{plethysticIdentity} implies an analogous stability property for the superrestriction coefficients.
\begin{corollary}{\em(Stability of superrestriction coefficients)} Let $\mu$ and $\nu$ be any two partitions, and let $\lambda\vdash d$. Then the sequence $(r_{\lambda\mu\nu}(m,n))_{m,n}$ eventually stabilizes for large enough $m$ and $n$.
\end{corollary}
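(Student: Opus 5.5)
The plan is to reduce stability of $r_{\lambda\mu\nu}(m,n)$ to the classical stability of $r_{\pi\mu[m]}$ and $r_{\eta\nu[n]}$, using Proposition~\ref{plethysticIdentity}. That proposition gives
\begin{displaymath}
r_{\lambda\mu[m]\nu[n]}=\sum_{\pi,\eta}c^{\lambda}_{\pi\eta'}\,r_{\pi\,\mu[m]}\,r_{\eta\,\nu[n]}
\end{displaymath}
for all $m,n$ large enough that $\mu[m]$, $\nu[n]$ are defined and $\lambda\in\Hi$.

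First I would control the range of the sum. Since $c^{\lambda}_{\pi\eta'}\neq 0$ forces $\pi\subseteq\lambda$ and $\eta'\subseteq\lambda$ with $|\pi|+|\eta|=d$, the index set is a finite set of pairs $(\pi,\eta)$ depending only on $\lambda$, not on $m,n$. In particular, $l(\pi)\le l(\lambda)$ and $\eta_1\le l(\lambda)$, $l(\eta)\le\lambda_1$.

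Next I need the constraints under which $\tilde W^m_\pi$ and $\tilde W^n_\eta$ are defined: $l(\pi)\le m$ and $l(\eta)\le n$. For $m\ge l(\lambda)$ and $n\ge \lambda_1$ (which also guarantees $\lambda\in\Hi$), every pair in the finite index set is admissible. So for such $m,n$ the summation set is fixed.

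Then I invoke the cited classical stability: for each fixed $\pi$, the sequence $r_{\pi\,\mu[m]}$ is eventually constant in $m$, and for each fixed $\eta$, the sequence $r_{\eta\,\nu[n]}$ is eventually constant in $n$. Taking the maximum of the finitely many thresholds, together with $l(\lambda)$, $\lambda_1$, $|\mu|+\mu_1$ and $|\nu|+\nu_1$, gives $M,N$ such that for $m\ge M$ and $n\ge N$ every term of the finite sum is independent of $m,n$. Hence $r_{\lambda\mu\nu}(m,n)$ is constant there.

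The only delicate point is making sure that the identity of Proposition~\ref{plethysticIdentity} holds uniformly with the same index set. One must also check that the classical coefficients $r_{\pi\mu}$ there are the ones defined via $S(m;d)$ with $d=|\pi|$ varying over pair components, which is fine since stability holds for each fixed $\pi$.
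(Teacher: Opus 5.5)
Your proposal is correct and follows the paper's argument: the paper deduces the corollary directly from Proposition~\ref{plethysticIdentity} together with the classical eventual constancy of $r_{\pi\,\mu[m]}$ and $r_{\eta\,\nu[n]}$. The extra bookkeeping you supply is exactly what the paper leaves implicit, namely that the index set $\{(\pi,\eta): c^{\lambda}_{\pi\eta'}\neq 0\}$ is finite and independent of $m,n$, and that all its pairs are admissible once $m\ge l(\lambda)$ and $n\ge\lambda_1$.
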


\bibliographystyle{alpha}
\bibliography{ref}
\end{document}